\numberwithin{equation}{section}
\numberwithin{figure}{section}
\theoremstyle{plain}
\newtheorem{thm}{\protect\theoremname}[section]
\newtheorem{lem}[thm]{Lemma}
\theoremstyle{definition}
\newtheorem{rem}[thm]{\protect\remarkname}
\author{Irving Dai}
\address{Department of Mathematics\\The University of Texas at Austin, Austin, USA}
\email{irving.dai@math.utexas.edu}
\author{Sungkyung Kang}
\address{Mathematical Institute, University of Oxford, Oxford, United Kingdom}
\email{sungkyung38@icloud.com}
\author{Abhishek Mallick}
\address{Department of Mathematics\\Rutgers University--New Brunswick, New Brunswick, USA}
\email{abhishek.mallick@rutgers.edu}
\author{JungHwan Park}
\address{Department of Mathematical Sciences, Korea Advanced Institute for Science and Technology, Daejeon, Republic of Korea}
\email{jungpark0817@kaist.ac.kr}
\author{Matthew Stoffregen}
\address{Department of Mathematics, Michigan State University, East Lansing, USA}
\email{stoffre1@msu.edu}
\providecommand{\corollaryname}{Corollary}
\providecommand{\definitionname}{Definition}
\providecommand{\remarkname}{Remark}
\providecommand{\theoremname}{Theorem}
\def\CF {\mathit{CF}}
\def\HF {\mathit{HF}}
\newcommand \CFm {\CF^-}
\newcommand \HFm {\HF^-}
\newcommand \Arf {\mathrm{Arf}}
\def\CFK{\mathit{CFK}}
\newcommand \gr{\mathrm{gr}}
\def\spinc {{\operatorname{spin^c}}}
\def\s{\mathfrak s}
\def\ff{\mathbb{F}}
\newcommand{\cU}{\mathcal{U}}
\newcommand{\cV}{\mathcal{V}}
\newcommand{\id}{\mathrm{id}}
\newcommand{\cC}{\mathcal{C}}
\newcommand{\ZZ}{\mathbb{Z}}
\newcommand{\Q}{\mathbb{Q}}
\newcommand{\N}{\mathbb{N}}
\newcommand{\sw}{\mathrm{sw}}
\newcommand{\exch}{\mathrm{exch}}
\begin{document}

\title{The $(2,1)$-cable of the figure-eight knot is not smoothly slice}

\begin{abstract}
We prove that the $(2,1)$-cable of the figure-eight knot is not smoothly slice by showing that its branched double cover bounds no equivariant homology ball. 
\end{abstract}
\thanks{ID was supported by NSF grant DMS-1902746. SK was supported by the Institute for Basic Science (IBS-R003-D1). AM was supported by the Max-Planck-Institut f\"ur Mathematik. JP was supported by Samsung Science and Technology Foundation (SSTF-BA2102-02) and the POSCO TJ Park Science Fellowship. MS was supported by NSF grant DMS-1952755.}
\maketitle

\section{Introduction}\label{sec:1}
In this paper, we prove that the $(2,1)$-cable of the figure-eight knot is not smoothly slice, answering a forty-year-old question posed by Kawauchi \cite{kawauchi1, kawauchi2}. This problem has attracted considerable attention (see for example \cite{banff,aim}) due to its connection to the slice-ribbon conjecture of Fox~\cite[Problem 25]{Fox:1962-1}. Explicitly, in \cite[Example 2]{miyazaki} Miyazaki showed that if $K$ is a fibered, negative amphicheiral knot with irreducible Alexander polynomial, then the $(2n,1)$-cable of $K$ is not (homotopy) ribbon for any $n \neq 0$. On the other hand, these knots are known to be strongly rationally slice (and thus algebraically slice) \cite{kawauchi1, Cha:2007-1, kw}.\footnote{See \cite[Definition 2]{kw} for a definition and discussion of strong rational sliceness.} While such cables are generally believed not to be slice, the fact that no argument has appeared in the literature has left open the possibility that these generate counterexamples to the slice-ribbon conjecture.

Prior to the current article, there has been no proof of nonsliceness for any of these knots, even in the simplest case of the $(2,1)$-cable of the figure-eight knot. For all of Miyazaki's examples, the usual concordance invariants defined using knot Floer homology \cite{OS-knots, rasmussen-knot-floer} vanish, as these are rational concordance invariants (see \cite{hom-survey} for a survey). One can additionally prove that for the $(2,1)$-cable of the figure-eight knot, the concordance invariants coming from involutive knot Floer homology \cite{hm} vanish (see \cite[Remark 1.4]{hom2020linear}) and the $s$-invariant \cite{rasmussen2010khovanov} from Khovanov homology \cite{khovanov} is zero.\footnote{This follows (for example) from \cite[Theorem 1.8]{manolescu2019generalization} and the fact that the $(2,1)$-cable of the figure-eight knot bounds nullhomologous slice disks in both $\mathbb{CP}^2$ and $\smash{\mathbb{\overline{CP}}^2}$.}

One general strategy for obstructing the sliceness of a knot is to show that its branched double cover bounds no $\ZZ/2\ZZ$-homology ball. This idea is well-known; see for example \cite{Cochran-Gompf:1988-1,Endo:1995-1,Lisca:2007-1, mow, Greene-Jabuka:2011-1, Hedden-Livingston-Ruberman:2012-1, Hedden-Pinzon:2021-1}. The new ingredient we use in this paper is to remember the data of the branching involution, which will allow us to obtain a more refined obstruction by asking for the existence of an \textit{equivariant} homology ball. This approach was employed successfully in \cite{aks, dhm} using tools from Heegaard Floer homology \cite{OS1, OS2}. Building on recent results from \cite{dms, mallick-surgery}, we extend these methods to prove:

\begin{thm}
\label{thm:mainthm1}
Let $K = 4_1$ be the figure-eight knot. Then the $(2,1)$-cable $K_{2,1}$ is not smoothly slice. In fact, $K_{2,1}$ has infinite order in the smooth concordance group.
\end{thm}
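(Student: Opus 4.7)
The plan is to follow the branched-double-cover strategy announced in the introduction, replacing the classical question ``does $\Sigma_2(K_{2,1})$ bound a $\ZZ/2\ZZ$-homology ball?'' by its equivariant refinement. As a first step, I would identify the pair $(\Sigma_2(K_{2,1}),\tau)$ explicitly, where $\tau$ is the covering involution. Since the figure-eight knot is strongly invertible, the $(2,1)$-cable inherits a strong inversion, and its double branched cover admits an explicit description as equivariant Dehn surgery on a link built from $4_1$. This realizes $(\Sigma_2(K_{2,1}),\tau)$ in a form amenable to the equivariant/involutive surgery formulas of \cite{mallick-surgery} and the equivariant Floer framework developed in \cite{dms}.

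Assume for contradiction that $K_{2,1}$ bounds a smoothly embedded disk $D\subset B^4$. The branched double cover $W := \Sigma_2(B^4,D)$ is then a smooth $\ZZ/2\ZZ$-homology ball with $\partial W = \Sigma_2(K_{2,1})$, and the deck transformation gives an involution $\widetilde{\tau}$ on $W$ extending $\tau$, with fixed set a properly embedded disk. Thus it suffices to rule out the existence of such an equivariant pair $(W,\widetilde{\tau})$.

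To do this, I would compute an equivariant Heegaard Floer invariant of $(\Sigma_2(K_{2,1}),\tau)$ of local-equivalence type, in the spirit of \cite{aks,dhm}, using the surgery formula of \cite{mallick-surgery} applied to a sufficiently detailed model of the knot Floer complex of $4_1$ together with its strong inversion. The goal is to extract a numerical invariant---a refinement of the involutive correction terms that remembers the $\tau$-action---which vanishes on any boundary of an equivariant $\ZZ/2\ZZ$-homology ball but is nonzero for $(\Sigma_2(K_{2,1}),\tau)$. Infinite order then follows from additivity under equivariant connected sum: the branched double cover of $n \cdot K_{2,1}$ is $\#^n \Sigma_2(K_{2,1})$ with the diagonal involution, so it suffices to show that the equivariant invariant of $(\Sigma_2(K_{2,1}),\tau)$ has infinite order in the relevant local equivalence group.

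The main obstacle will be the equivariant calculation itself. All the classical, involutive, and Khovanov-type concordance invariants of $K_{2,1}$ are known to vanish, so the refinement must genuinely detect the interaction between the involution and the Floer complex. Producing a nontrivial such invariant requires running the equivariant surgery formula on the full knot Floer complex of the figure-eight rather than on any simplified model, and verifying that the resulting filtered, $\tau$-equivariant chain complex is not locally equivalent to any ``ball-bounding'' complex---and moreover that no multiple of it is.
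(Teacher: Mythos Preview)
Your broad strategy---take the branched double cover, remember the covering involution, and obstruct the existence of an equivariant $\ZZ/2\ZZ$-homology ball via equivariant/involutive Heegaard Floer theory---is exactly the paper's approach. But the proposal is imprecise at two junctures where being imprecise would cause the argument to fail. First, the topological input: you describe the pair $(\Sigma_2(K_{2,1}),\tau)$ via ``the strong inversion on $4_1$'' and ``surgery on a link built from $4_1$,'' and propose to run the surgery formula on $\CFK(4_1)$. The paper's identification, via the Montesinos trick, is $\Sigma_2(K_{2,1})\cong S^3_{+1}(K\# K^r)$ with the branching involution equal to the \emph{swapping} involution on $K\# K^r$ exchanging the two summands. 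The relevant knot-level input is therefore $\CFK(4_1\# 4_1^r)$ with the swap action, computed via the tensor-product formula of \cite[Theorem~4.3]{dms}; this is not the same as, and is not obviously recoverable from, the strong inversion on $4_1$ itself. Second, the equivariant surgery formula of \cite{mallick-surgery} is a \emph{large} surgery formula; it does not directly give the $(\tau,\iota)$-complex of $(+1)$-surgery. The paper handles this with an auxiliary equivariant, spin, negative-definite cobordism from $(+1)$-surgery to $n$-surgery, producing a $(\tau,\iota)$-local map into $A_0(4_1\# 4_1^r)$. Your outline does not address this reduction.

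The most serious gap is in the nature of the obstruction. You aim for ``a numerical invariant---a refinement of the involutive correction terms that remembers the $\tau$-action.'' This is precisely where a na\"ive attempt breaks down: the paper observes (Remark~\ref{rem:futurework}) that on $A_0(4_1\# 4_1^r)$ each of $\tau$, $\iota$, and $\tau\circ\iota$ is \emph{individually} locally trivial. Hence any correction-term-style invariant built from a single involution (the $\bar d,\underline d$ of $\iota$, their $\tau$-analogues, or the $\iota\tau$-versions of \cite{dhm}) vanishes here. The actual obstruction is genuinely a two-symmetry phenomenon: there is no $U$-nontorsion class in $H_*(A_0)$ of grading zero fixed by \emph{both} $\iota$ and $\tau$, even though each has such a fixed class separately. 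For infinite order, your additivity idea is right in spirit; the paper implements it not via a numerical invariant but by exhibiting a $(\tau,\iota)$-local map from $\CFm(S^3_{+1}(4_1\# 4_1^r))$ \emph{to} the trivial complex, which together with the nonexistence of a local map in the other direction forces nontorsion via the connected-sum formula.
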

\noindent
This shows that $K_{2,1}$ has slice genus one. Our proof in fact implies $K_{2,1}$ is not slice in any $\ZZ/2\ZZ$-homology ball; see Remark~\ref{rem:homologyconcordance}. 


More generally, if $K$ is a fibered, negative amphicheiral knot with irreducible Alexander polynomial, then $K_{p, q} \# - T_{p, q}$ is rationally slice for any choice of cabling parameter $(p, q)$. (This can be seen by cabling the rational concordance from $K$ to the unknot provided by \cite{kw}.) For such $K$, work of Miyazaki \cite[Theorem 8.6]{miyazaki} again implies that $K_{p, q} \# - T_{p, q}$ is not ribbon, except in the trivial case when $p$ is zero. It is thus natural to investigate the nonsliceness of knots of the form $K_{p, q} \# - T_{p, q}$. Note that setting $(p, q) = (2, 1)$ reduces to studying the $(2,1)$-cable of $K$. Here, we show that the argument of Theorem~\ref{thm:mainthm1} extends to the general case $(p, q) = (2, k)$ under certain restrictions on the knot Floer homology of $K$:


\begin{thm}
\label{thm:mainthm2}
Let $K$ be a Floer-thin knot with $\Arf(K) = 1$ and let $k \in \N$ be odd. Then $K_{2, k} \# - T_{2, k}$ is not smoothly slice, and in fact has infinite order in the smooth concordance group.
\end{thm}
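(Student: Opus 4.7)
The plan is to show that the branched double cover $\Sigma_2(K_{2,1})$, equipped with its branching involution $\tau$, does not equivariantly bound a $\ZZ/2\ZZ$-homology ball. Since any slice disk for $K_{2,1}$ pulls back to exactly such an equivariant bounding, this forces non-sliceness, and equivariant additivity will upgrade the conclusion to infinite order in the smooth concordance group.

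First, I would identify $(\Sigma_2(K_{2,1}), \tau)$ explicitly. A standard Montesinos-type computation realizes $\Sigma_2(K_{2,1})$ as Dehn surgery on a symmetric two-component link, each of whose components is a copy of $K$ (possibly after a mirror or orientation reversal), with $\tau$ acting as the involution that exchanges the two components. Next, I would apply the equivariant surgery formulae of \cite{dms, mallick-surgery} to express the equivariant Heegaard Floer invariants of this pair in terms of knot Floer data for $K$ equipped with the induced involution. Because $\tau$ exchanges the two copies of $K$, the resulting knot-Floer involution is essentially the exchange map $\exch$ (up to the standard $\iota_K$ correction) previously studied in equivariant concordance.

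The Floer-thin hypothesis enters decisively when carrying out the computation. Such complexes split as a standard box together with thin staircases of the standard form, so both the two-component link Floer data and its exchange symmetry admit a completely explicit description, and the surgery formula reduces to a finite combinatorial problem. The Arf invariant controls a mod-$2$ Maslov shift in the spin structure on $\Sigma_2(K_{2,1})$ pulled back from $S^3$; when $\Arf(K) = 1$, the extracted equivariant $d$-invariant (or the relevant refinement from \cite{dms}) lands in the part of the target that obstructs equivariant bounding. For the infinite-order statement, I would apply the same computation to the $n$-fold connected sum $nK_{2,1}$, using additivity of the chosen equivariant invariant under equivariant connected sum.

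The main obstacle is this middle step: one has to push the exchange involution through the equivariant surgery formula, keep control of the $\iota_K$-type corrections at each stage, and verify that under the Floer-thin and $\Arf(K) = 1$ hypotheses no cancellation occurs. The contrasting case $\Arf(K) = 0$, in which equivariant rational homology balls for these branched covers are already known to exist, confirms that the Arf hypothesis must be used in an essential way.
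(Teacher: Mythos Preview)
Your outline has the right overall shape --- branched double cover, Montesinos trick, equivariant surgery formula --- but several of the key mechanisms are misidentified, and the argument as written would not go through.

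First, the surgery description. The Montesinos trick gives $\Sigma_2(K_{2,1}) \cong S^3_{+1}(K \# K^r)$, surgery on a \emph{single} knot, with $\tau$ the swapping involution on the connected sum. The equivariant large surgery formula of \cite{mallick-surgery} applies only to $S^3_n(K \# K^r)$ for $n \geq g_3(K \# K^r)$, not to $(+1)$-surgery; the paper bridges this gap via an equivariant negative-definite spin cobordism from $(+1)$-surgery to $n$-surgery. Your proposal skips this step entirely, and there is no two-component equivariant surgery formula in the cited references that would let you work directly with a link description.

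Second, and more seriously, the role of the Arf invariant is not what you describe. It does not enter as a Maslov or spin-structure shift on $\Sigma_2(K_{2,1})$. Rather, for a Floer-thin knot with $\tau(K)=0$, Hendricks and Manolescu \cite[Section 8]{hm} show that the $\iota_K$-local equivalence class of $\CFK(K)$ is that of the unknot when $\Arf(K)=0$ and that of $4_1$ when $\Arf(K)=1$. The paper then proves a doubling lemma: an $\iota_K$-local equivalence $\CFK(K_1) \sim \CFK(K_2)$ induces a $(\tau_K,\iota_K)$-local equivalence $\CFK(K_1 \# K_1^r) \sim \CFK(K_2 \# K_2^r)$. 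This reduces everything to the explicit $4_1$ computation already done for Theorem~\ref{thm:mainthm1}; there is no separate ``finite combinatorial problem'' to solve for general thin $K$.

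Third, your obstruction is too weak. The paper remarks explicitly that on $A_0(4_1 \# 4_1^r)$ the actions of $\tau$, of $\iota$, and of $\iota\tau$ are each \emph{individually} locally trivial; any single ``equivariant $d$-invariant'' built from $\tau$ alone would vanish. The actual obstruction is the nonexistence of a $U$-nontorsion class simultaneously invariant under both $\tau$ and $\iota$. Your sketch never invokes the Hendricks--Manolescu involution $\iota$ on the $3$-manifold side, and without it the argument collapses. Finally, for infinite order the paper does not use additivity of a numerical invariant but rather the existence of a local map from $\CFm(S^3_{+1}(4_1 \# 4_1^r))$ \emph{into} the trivial complex, combined with a tensoring argument.
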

\noindent
The definition of Floer-thin may be found in \cite{mo}; all alternating or quasi-alternating knots are Floer-thin. Theorem~\ref{thm:mainthm2} can be applied to the next few of Miyazaki's examples after $K = 4_1$: these are $K = 6_3, 8_{12}$, and $8_{17}$. The smallest fibered, negative amphicheiral knot with irreducible Alexander polynomial which is not subsumed by Theorem~\ref{thm:mainthm2} is $10_{17}$. 

\section*{Acknowledgements}
The authors would like to thank Jae Choon Cha, Matthew Hedden, Peter Feller, and Ian Zemke for helpful discussions. 

\section{General Overview}\label{sec:2}
The topological outline of our strategy is quite straightforward. If $K$ is a knot with slice disk $D$, then the branched double cover $\Sigma_2(K)$ bounds a $\ZZ/2\ZZ$-homology ball given by the branched double cover $\Sigma_2(D)$. One can thus obstruct the sliceness of $K$ by showing that $\Sigma_2(K)$ does not bound any $\ZZ/2\ZZ$-homology ball, essentially moving the question into the setting of homology cobordism. In recent years, a refinement of this strategy has emerged: in addition to considering the 3-manifold $\Sigma_2(K)$, we may also remember the action of the branching involution $\tau \colon \Sigma_2(K) \rightarrow \Sigma_2(K)$. We can then ask if $\Sigma_2(K)$ bounds an \textit{equivariant} $\ZZ/2\ZZ$-homology ball; that is, a homology ball $W$ over which $\tau$ extends as a smooth involution $\tau_W \colon W \rightarrow W$. If $K$ has slice disk $D$, then such an extension is obtained by taking $\tau_W$ to be the branching action over $D$. Thus, asking for the existence of an equivariant homology ball also provides an obstruction to the sliceness of $K$, which is (in principle) a strictly stronger refinement of the previous.\footnote{For example, it is easy to see that $\Sigma_2((4_1)_{2,1})$ indeed bounds a $\ZZ/2\ZZ$-homology ball; see Remark~\ref{rem:corks}. Here, we show that it bounds no equivariant ball.} This strategy was employed successfully in several examples by the second author (in joint work with Alfieri and Stipsicz \cite{aks}) and the first and third authors (in joint work with Hedden \cite{dhm}); see also \cite{bh, kmt}.

In both \cite{aks} and \cite{dhm}, the obstruction to the existence of such a homology ball was established through the use of Heegaard Floer homology. We recall the essential features of this formalism here. Let $Y$ be a 3-manifold equipped with a $\spinc$-structure $\s$. In \cite{OS1, OS2}, Ozsv\'ath and Szab\'o construct a free, finitely-generated chain complex $\CFm(Y,\s)$ over the polynomial ring $\ff[U]$ (where $\ff = \ZZ/2\ZZ$) whose homotopy type is a diffeomorphism invariant of the pair $(Y, \s)$. (For a discussion and proof of naturality, see \cite{jtz}.) As explained in \cite{OS-triangles}, $\CFm(Y, \s)$ has a relative $\ZZ$-grading with $\gr(U) = -2$; if $\s$ is torsion then this can be lifted to an absolute $\Q$-grading. We briefly review the important aspects of Heegaard Floer homology which will be relevant for the present article. 

Firstly, we have the $\mathrm{TQFT}$ structure of Heegaard Floer homology \cite{OS-triangles}. Let $W$ be a cobordism from $Y_1$ to $Y_2$ and let $\s$ be a $\spinc$-structure on $W$ restricting to $\s_i = \s|_{Y_i}$. Then Ozsv\'ath and Szab\'o construct a Heegaard Floer cobordism map:
\[
F_{W, \s} \colon \CFm(Y_1, \s_1) \rightarrow \CFm(Y_2, \s_2).
\]
Secondly, by work of Juh\'asz, Thurston, and Zemke, the mapping class group $\mathrm{MCG}(Y)$ acts naturally on $\CFm(Y)$ \cite{jtz}. More precisely, let $\tau$ be a self-diffeomorphism of $Y$, and let $\s$ be a $\spinc$-structure on $Y$ which is invariant under pullback by $\tau$. Then $\tau$ induces a grading-preserving, $\ff[U]$-equivariant map
\[
\tau \colon \CFm(Y, \s) \rightarrow \CFm(Y, \s),
\]
which by abuse of notation we also denote by $\tau$. There is a slight subtlety in that generally speaking, only the \textit{based} mapping class group acts naturally on $\CFm(Y, \s)$. However, if $Y$ is a rational homology sphere (as will be the case in our setting), it follows from work of Zemke \cite{zemke-graph} that this subtlety can be disregarded; see \cite[Lemma 4.1]{dhm}. Finally, we also make essential use of the involutive Heegaard Floer package of Hendricks and Manolescu \cite{hm}. This is an enhancement of the usual Heegaard Floer formalism which utilizes the conjugation symmetry present in a Heegaard diagram. If $\s$ is a self-conjugate $\spinc$-structure on $Y$, then Hendricks and Manolescu define a grading-preserving, $\ff[U]$-equivariant homotopy involution
\[
\iota \colon \CFm(Y, \s) \rightarrow \CFm(Y, \s).
\]
The homotopy type of the pair $(\CFm(Y, \s), \iota)$ is a diffeomorphism invariant of $(Y, \s)$; $\iota$ should be thought of as an additional internal symmetry of $\CFm(Y, \s)$. The importance of considering $\iota$ in addition to $\tau$ was observed in \cite{dhm}; in the current article, the usage of $\iota$ will likewise be essential.

The crucial point is the following:

\begin{thm}\cite[Proof of Theorem 1.2]{OS-triangles, dhm}\label{thm:local}
Let $W$ be a rational homology cobordism between rational homology spheres $Y_1$ and $Y_2$. Let $\s$ be a $\spinc$-structure on $W$ restricting to $\s_i = \s|_{Y_i}$. Then the Heegaard Floer cobordism map 
\[
F_{W, \s} \colon \CFm(Y_1, \s_1) \rightarrow \CFm(Y_2, \s_2)
\]
satisfies the following properties:
\begin{enumerate}
    \item The Heegaard Floer grading shift of $F_{W, \s}$ is zero.
    \item The induced map on the localized homology 
    \[  
    H_*(U^{-1}\CFm{(Y_1, \s_1)}) \xrightarrow{\cong} H_*(U^{-1}\CFm{(Y_2, \s_2)})
    \]
    is an isomorphism. 
\end{enumerate}
Suppose moreover that $\s$ is self-conjugate. Then:
\begin{enumerate}
    \item[(3)] Up to homotopy, $F_{W,\s}$ intertwines the $\iota$-action on $\CFm(Y_1, \s_1)$ with the $\iota$-action on $\CFm(Y_2, \s_2)$:
    \[
    F_{W,\s} \circ \iota_1 \simeq \iota_2 \circ F_{W, \s}.
    \]
\end{enumerate}
Finally, let $\tau$ be a self-diffeomorphism of $W$ restricting to $\tau_i = \tau|_{Y_i}$. Suppose $\s$ is invariant under $\tau$. Then:
\begin{enumerate}
    \item[(4)] Up to homotopy, $F_{W, \s}$ intertwines the action of $\tau_1$ on $\CFm(Y_1, \s_1)$ with the action of $\tau_2$ on $\CFm(Y_2, \s_2)$:
    \[
    F_{W,\s} \circ \tau_1 \simeq \tau_2 \circ F_{W, \s}.
    \]
\end{enumerate}
\end{thm}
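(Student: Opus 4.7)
The plan is to address the four claims in turn. Parts (1) and (2) are structural facts about cobordism maps for rational homology cobordisms, while parts (3) and (4) are naturality statements pieced together from the literature. The main obstacle I anticipate is organizing the input for part (2); the other claims follow either from routine computations or from standard naturality results.

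For (1), I would invoke the Ozsv\'ath--Szab\'o grading shift formula
\[
\tfrac{1}{4}\bigl(c_1(\s)^2 - 2\chi(W) - 3\sigma(W)\bigr)
\]
and read off the topology of $W$. Since $W$ is a rational homology cobordism between rational homology spheres, both inclusions $Y_i\hookrightarrow W$ induce isomorphisms on $H_*(-;\Q)$, so $H_*(W;\Q)$ has ranks $1,0,0,1,0$ in degrees $0,\ldots,4$. This forces $\chi(W)=0$ and $\sigma(W)=0$, while $H^2(W;\Q)=0$ makes $c_1(\s)$ torsion and hence $c_1(\s)^2=0$. The formula then evaluates to zero, establishing that $F_{W,\s}$ respects the absolute $\Q$-grading (which is well-defined since $\s_i$ is torsion).

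For (2), the canonical identifications $\HFinf(Y_i,\s_i)\cong\ff[U,U^{-1}]$, together with the grading-preservation from (1) and the $\ff[U]$-equivariance of $F^\infty_{W,\s}$, reduce matters to showing that the map is nonzero. This follows from Ozsv\'ath--Szab\'o's structural result that a cobordism with $b_2^+=0$ between rational homology spheres induces an isomorphism on $\HFinf$; in our setting $b_2(W)=0$, so $b_2^+(W)=b_2^-(W)=0$, well within the scope. One could also argue more directly by composing with the reverse cobordism $\overline W\colon Y_2\to Y_1$ and identifying $F^\infty_{\overline W,\overline\s}\circ F^\infty_{W,\s}$ with the identity via the composition law and a handle cancellation on the double $W\cup_{Y_2}\overline W$.

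Claims (3) and (4) are naturality assertions. Claim (3) is built into the Hendricks--Manolescu formalism \cite{hm}: the conjugation action $\iota$ is constructed so as to commute with cobordism maps (up to chain homotopy) whenever the cobordism $\spinc$-structure is self-conjugate. Claim (4) follows from the mapping class group naturality of Heegaard Floer homology established by Juh\'asz--Thurston--Zemke \cite{jtz}. The one subtlety is that \cite{jtz} gives naturality only for the \emph{based} mapping class group; however, when the $Y_i$ are rational homology spheres, Zemke's graph cobordism framework \cite{zemke-graph} implies that basepoint choices are immaterial, as formalized in \cite[Lemma 4.1]{dhm}.
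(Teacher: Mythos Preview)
Your treatment of (1)--(3) matches the paper's: the grading-shift computation and the $b_2^+=0$ isomorphism on $\HFinf$ are exactly the ``standard results'' the paper gestures at, and (3) is attributed to Hendricks--Manolescu in both.

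For (4), however, your argument does not quite land on the actual obstruction. The naturality package of \cite{jtz} concerns the action of the (based) mapping class group of a \emph{3-manifold} on its Floer complex; it does not by itself assert that a diffeomorphism of a \emph{4-dimensional cobordism} intertwines the cobordism map. What one needs is diffeomorphism invariance of the cobordism maps themselves, which in Zemke's graph-cobordism framework \cite{zemke-graph} means $F_{W,\Gamma,\s}$ is carried to $F_{W,\tau(\Gamma),\s}$ under $\tau$. The subtlety the paper singles out is therefore not the basepoint on $Y_i$ (which your citation of \cite[Lemma~4.1]{dhm} handles) but the \emph{path} $\Gamma\subset W$ used to define $F_{W,\s}$: since $\tau$ need not fix $\Gamma$, one must show that $F_{W,\Gamma,\s}\simeq F_{W,\tau(\Gamma),\s}$ before the intertwining follows. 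This is precisely what is carried out in \cite[Section~6]{dhm}; the basepoint lemma you cite is a related but strictly weaker statement, and invoking \cite{jtz} alone leaves the path-dependence unaddressed.
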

\noindent
The first two parts of Theorem~\ref{thm:local} are standard results in the literature, while the third is work of Hendricks and Manolescu \cite{hm}. The only subtlety in establishing the remaining claim is to analyze the choice of path needed to define $F_{W, \s}$ (see \cite{zemke-graph}), since this may not be invariant under $\tau$; see \cite[Section 6]{dhm}. This is done in the proof of \cite[Theorem 1.2]{dhm} (in fact, for a much wider class of cobordisms). We call a map satisfying the conditions of Theorem~\ref{thm:local} a \textit{local map}, or sometimes a 
\textit{$(\tau, \iota)$-local map} for emphasis. Given two complexes $C_1$ and $C_2$ as above with actions of $\iota$ and $\tau$, we say that $C_1$ and $C_2$ are \textit{locally equivalent} if there are local maps $f \colon C_1 \rightarrow C_2$ and $g \colon C_2 \rightarrow C_1$ in both directions. Occasionally, we will use this terminology but relax the grading shift condition when our meaning is clear.

To see the relevance of Theorem~\ref{thm:local}, let $Y$ be a $\ZZ/2\ZZ$-homology ball with involution $\tau$ and suppose there exists an equivariant $\ZZ/2\ZZ$-homology ball $W$ with boundary $Y$. Note that $W$ has a unique self-conjugate $\spinc$-structure $\s$ which is necessarily preserved by the extension $\tau_W$; write $\s|_Y = \s_0$. Puncturing $W$ gives a $(\tau, \iota)$-local equivalence between the trivial complex $\CFm(S^3) = \ff[U]$ and $\CFm(Y, \s_0)$, with the actions of $\iota$ and $\tau$ on the former being the identity. Here and throughout, we normalize gradings so that the uppermost generator of $\CFm(S^3)$ lies in Maslov grading zero; this differs slightly from the usual convention in which $\CFm(S^3)$ starts in Maslov grading $-2$.

Our strategy will hence be to compute the actions of $\iota$ and $\tau$ on the Heegaard Floer complex of $\Sigma_2(K_{2,1})$ and then algebraically rule out the existence of a local map from the trivial complex. Techniques for establishing the nonexistence of maps as in Theorem~\ref{thm:local} have been developed by many authors and have led to the formalism of the \textit{local equivalence group}; see especially the work of Hendricks, Manolescu, and Zemke \cite{hmz}. Although it will be helpful for the reader to have a broad familiarity with these ideas, for the main claim of Theorem~\ref{thm:mainthm1} this will not be needed. See \cite[Section 3]{aks} or \cite[Section 3]{dhm} for a more comprehensive discussion of local equivalence in this setting.

The main crux of this paper will thus be the computation of $\iota$ and $\tau$. In general, however, determining the induced action of $\tau$ is difficult. Here, we make essential use of the following topological observation: for any knot $K$, the Montesinos trick \cite{montesinos} shows that 
\[
\Sigma_2(K_{2,1}) \cong S^3_{+1}(K \# K^r).
\]
Moreover, this homeomorphism identifies the branching action on $\Sigma_2(K_{2,1})$ with the involution on $S^3_{+1}(K \# K^r)$ induced from the obvious strong inversion on $K \# K^r$ interchanging the two factors. (See \Cref{fig:obviousinv}.) We refer to this strong inversion as the \textit{swapping involution}; see \cite[Section 1.1.12]{saveliev} for further details.

\begin{figure}[hbt]
    \centering
    \includegraphics[width=0.48\textwidth]{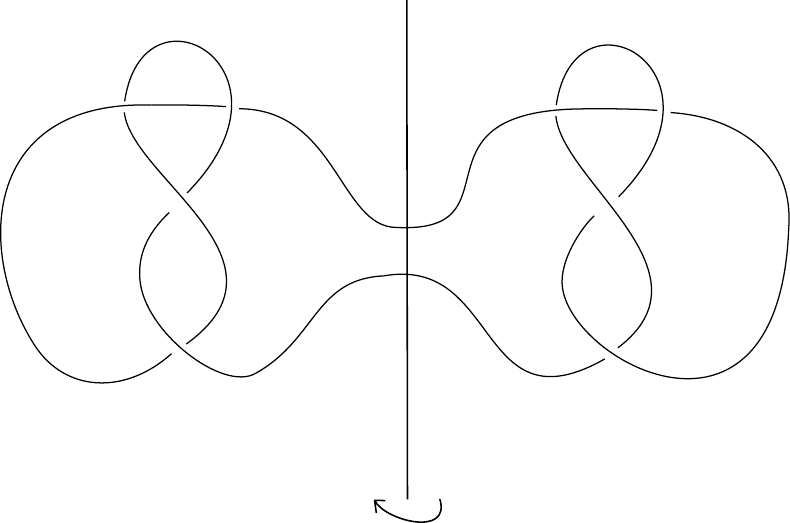}
    \caption{The swapping involution on $K\# K^r$. This is constructed as follows: fix an axis of rotation far away from $K$, and rotate $K$ about this axis by $180^\circ$ degrees. Reversing orientation on the resulting image and summing along the obvious band gives $K \# K^r$.}
\label{fig:obviousinv}
\end{figure}

In recent work \cite{dms}, the first, third, and fifth authors carried out a program for studying involutions on knots using the knot Floer homology package. (For background on knot Floer homology, see \cite{OS-knots, rasmussen-knot-floer}.) Explicitly, in \cite[Section 3]{dms} it is shown that a strong inversion on a knot induces an automorphism of its knot Floer complex (see also \cite[Section 2.2]{mallick-surgery}). Building on work of Juh\'asz and Zemke \cite[Theorem 8.1]{jz}, in \cite[Theorem 4.3]{dms} the authors computed the induced action for such examples $K \# K^r$ as in Figure~\ref{fig:obviousinv}. Combining this with a large surgery formula due to the third author \cite[Theorem 1.1]{mallick-surgery}, this will allow us to compute the induced action of $\tau$ on $\smash{S^3_n(K \# K^r)}$ for $n$ large. We then use an auxiliary argument involving an equivariant, negative definite cobordism from $n$-surgery to $(+1)$-surgery to constrain the induced action of $\tau$ on $\smash{S^3_{+1}(K \# K^r)}$. By the above topological observation, this is the same as the branching action on $\Sigma_2(K_{2,1})$.

\begin{rem}\label{rem:corks}
The discussion of this section can be summarized succinctly as follows. Taking double branched covers yields a homomorphism from the smooth concordance group to the $(\ZZ/2\ZZ)$-homology cobordism group $\smash{\Theta^3_{\ZZ/2\ZZ}}$. In \cite[Section 2]{dhm}, a refinement of $\smash{\Theta^3_{\ZZ/2\ZZ}}$ called the \textit{$(\ZZ/2\ZZ)$-homology bordism group of involutions} was defined. Denoted by $\smash{\Theta^\tau_{\ZZ/2\ZZ}}$, this additionally takes into account an involution on each homology sphere. Remembering the branching action gives a homomorphism from $\mathcal{C}$ to $\smash{\Theta^\tau_{\ZZ/2\ZZ}}$, through which the previous homomorphism factors:
\[
\Sigma_2 \colon \mathcal{C} \xrightarrow{2\text{x cover}} \Theta^\tau_{\ZZ/2\ZZ} \xrightarrow{\text{forgetful map}} \Theta^3_{\ZZ/2\ZZ}.
\]
The approach of the present article is to show that $K_{2,1}$ is not slice by showing that its image is nonzero in $\smash{\Theta^\tau_{\ZZ/2\ZZ}}$. Note that (for example) $(4_1)_{2,1}$ \textit{does} have trivial image in $\smash{\Theta^3_{\ZZ/2\ZZ}}$: since $4_1 \# 4_1^r$ is slice, $S^3_{+1}(4_1 \# 4_1^r)$ bounds a homology ball (in fact, a contractible manifold). However, $4_1 \# 4_1^r$ is not \textit{equivariantly} slice; while $4_1^r$ is isotopic to $-4_1$, this isotopy does not preserve the swapping involution. Readers familiar with the work of Lin, Ruberman, and Saveliev \cite{lrs} will recognize that we show $S^3_{+1}(4_1 \# 4_1^r)$ (with the branching involution) is a $(\ZZ/2\ZZ)$-\textit{strong cork}. See \cite[Section 1.2.2]{lrs} or \cite[Definition 1.8]{dhm} for further discussion.
\end{rem}

\section{Analysis of large surgery}\label{sec:3}

We now describe how to compute the action of the swapping involution on $\CFK(K \# K^r)$. Throughout, we use Zemke's conventions for knot Floer homology \cite{zemke-link, zemke-gradings}. This assigns to a (doubly-based, oriented) knot $K$ a bigraded complex $\CFK(K)$ over the polynomial ring $\ff[\cU, \cV]$. Denote the two gradings by $\gr = (\gr_U, \gr_V)$; we have $\gr(\cU) = (-2,0)$, $\gr(\cV) = (0, -2)$, and $\gr(\partial) = (-1, -1)$. See \cite[Section 1.5]{zemke-gradings} for further discussion.

Building on the naturality results of \cite{zemke-link}, in \cite[Section 3]{dms} it is shown that a strong inversion $\tau$ on $K$ induces a well-defined homotopy involution $\tau_K$ on $\CFK(K)$. This is a skew-graded, skew-equivariant automorphism of $\CFK(K)$ in the sense that $\tau_K$ interchanges $\gr_U$ and $\gr_V$ and satisfies $\tau_K(\cU^i \cV^j x) = \cV^i \cU^j \tau_K(x)$. It will also be important for us to consider the involutive knot Floer map defined by Hendricks and Manolescu in \cite{hm}. This is similarly a skew-graded, skew-equivariant automorphism of $\CFK(K)$ which we denote by $\iota_K$. See \cite[Section 6]{hm} for further discussion.

\subsection{The swapping involution}
Fix any knot $K$. By a slight abuse of notation, throughout this subsection we write $\tau$ for the $180^\circ$-degree rotation about an axis far away from $K$. We form the connected sum $K \# \tau K^r$ by reversing orientation on $\tau K$ and summing along an obvious band as in Figure~\ref{fig:obviousinv}. While $\tau K$ is of course isotopic to $K$, in order to discuss the action of the swapping involution it will be helpful for us to clearly distinguish between the two. We thus follow the notation of \cite[Theorem 4.3]{dms} and temporarily write $K \# \tau K^r$, rather than $K \# K^r$. Denote the swapping involution on $K \# \tau K^r$ by $\tau_\sw$.

The following tensor product formula allows us to simultaneously calculate the action of $\tau_\sw$ and the involutive knot Floer map on $\CFK(K \# \tau K^r)$:

\begin{thm}{{\cite[Theorem 4.3]{dms}}}\label{thm:swapping}
The triples
\[
(\CFK(K \# \tau K^r), \tau_\sw, \iota_{K \#  \tau K^r}) \quad \text{and} \quad (\CFK(K) \otimes \CFK(\tau K^r), \tau_\otimes, \iota_\otimes)
\]
are homotopy equivalent, where
\[
\tau_\otimes = (\id \otimes \id + \Psi \otimes \Phi) \circ \tau_\exch 
\]
and
\[
\iota_\otimes = \varsigma_\otimes \circ (\id \otimes \id + \Psi \otimes \Phi) \circ (\iota_K \otimes \iota_{\tau K^r}).
\]
\end{thm}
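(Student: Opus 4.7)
The plan is to deduce the formulas by combining the Künneth-type connected sum formula for $\CFK$ with the graph cobordism TQFT of Juh\'asz and Zemke~\cite{jz}, together with the naturality package of Zemke~\cite{zemke-link}. First I would fix a doubly-pointed Heegaard diagram for $K \# \tau K^r$ obtained by stacking standard diagrams for $K$ and $\tau K^r$ across a connect-sum neck, chosen so that the swapping involution acts on the diagram by a concrete model rotation. Ozsv\'ath-Szab\'o's connected sum formula, upgraded to a natural chain-homotopy equivalence in \cite{zemke-link}, then identifies $\CFK(K \# \tau K^r)$ with $\CFK(K) \otimes \CFK(\tau K^r)$, and this is the identification through which all subsequent maps will be compared.

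Next I would realize $\tau_\sw$ as a self-diffeomorphism of the based pair $(S^3, K \# \tau K^r, w, z)$ and decompose it into two pieces: (i) the pure swap that exchanges the two factors of the Heegaard diagram, which under the tensor decomposition is manifestly the algebraic exchange map $\tau_\exch$; and (ii) a basepoint-moving isotopy that drags $w$ and $z$ back to their original positions along a loop that encircles the connect-sum band. Piece (ii) is exactly the sort of basepoint loop analyzed via \cite[Theorem 8.1]{jz}, and by the graph cobordism formula it acts on the tensor product as $\id \otimes \id + \Psi \otimes \Phi$, where $\Psi$ and $\Phi$ are Zemke's basepoint-moving operators on the two factors. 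Composing (i) and (ii) yields the stated formula $\tau_\otimes = (\id \otimes \id + \Psi \otimes \Phi) \circ \tau_\exch$.

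For $\iota_{K \# \tau K^r}$ I would invoke the known connected sum formula for the involutive action in the style of Hendricks-Manolescu-Zemke, which on $\CFK(K) \otimes \CFK(\tau K^r)$ is $(\id \otimes \id + \Psi \otimes \Phi) \circ (\iota_K \otimes \iota_{\tau K^r})$. The extra $\varsigma_\otimes$ in the statement bookkeeps the conjugation swap of tensor factors which is forced by the skew-equivariance of each $\iota_K$, namely the interchange of the $\cU$ and $\cV$ gradings on the two factors. Writing out the composition produces precisely $\varsigma_\otimes \circ (\id \otimes \id + \Psi \otimes \Phi) \circ (\iota_K \otimes \iota_{\tau K^r})$.

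The main obstacle is pinning down piece (ii) exactly. A priori, the basepoint-moving diffeomorphism accompanying the swap could contribute any polynomial in the basepoint-moving operators, and one must verify via an explicit analysis of the associated ribbon graph cobordism, tracking colorings and relative $\Spinc$ structures, that the contribution is exactly $\id \otimes \id + \Psi \otimes \Phi$ and not a more complicated expression. A secondary subtlety is matching conventions for the reversed factor $\tau K^r$: the orientation reversal interchanges the roles of $\cU$ and $\cV$ on the second tensor factor, so one must verify that $\tau_\exch$ and $\varsigma_\otimes$ are the correct skew-equivariant versions of the naive exchange and conjugation maps. Both points are essentially bookkeeping once the graph TQFT setup is fixed, but they are the places where careful tracking is indispensable.
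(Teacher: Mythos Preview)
The paper does not prove this theorem; it is quoted as \cite[Theorem~4.3]{dms} and only the terms $\tau_\exch$, $\Psi$, $\Phi$, $\varsigma$ are explained afterward. So there is no in-paper argument to compare your sketch against directly. What the paper does say is that the computation in \cite{dms} ``builds on work of Juh\'asz and Zemke \cite[Theorem~8.1]{jz},'' which is the graph-cobordism/basepoint-moving input you invoke, so your high-level plan is in the right neighborhood.

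That said, two points of your sketch conflict with information the paper does provide. First, your explanation of $\varsigma_\otimes$ is wrong. You claim it ``bookkeeps the conjugation swap of tensor factors which is forced by the skew-equivariance of each $\iota_K$.'' But the standard connected-sum formula \cite[Theorem~1.1]{zemke2019connected} for the Hendricks--Manolescu involution is exactly $(\id\otimes\id+\Psi\otimes\Phi)\circ(\iota_K\otimes\iota_{K'})$, with no Sarkar map. The paper's remark immediately after the theorem says explicitly that, by \cite[Lemma~2.21]{dms}, one may move $\varsigma_\otimes$ from $\iota_\otimes$ over to $\tau_\otimes$ and obtain a homotopy-equivalent triple. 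So $\varsigma_\otimes$ is not forced by skew-equivariance; its placement is a convention coming from how \cite{dms} packages $\tau$ and $\iota$ together.

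Second, your account of the $\Psi\otimes\Phi$ term in $\tau_\otimes$ (a basepoint-dragging loop on $K\#\tau K^r$ encircling the band) does not match the paper's explanation. The same remark attributes the correction to ``a subtlety involving the definition of $\tau_K$ and the choice of decoration for the usual pair-of-pants cobordism from $K\sqcup\tau K^r$ to $K\#\tau K^r$,'' referring to \cite[Section~4.2]{dms}. In other words, the correction arises when one transports the genuine exchange $\tau_\exch$ on the disjoint union through the merge cobordism, not from an isotopy intrinsic to the connected sum. Your decomposition into ``pure swap plus basepoint loop'' would need to be reconciled with that picture before it could be called a proof.
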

\noindent
For convenience of the reader, we briefly describe the terms appearing in Theorem~\ref{thm:swapping}. The most important map to explain is $\tau_\exch$, which should be thought of as the induced action of $\tau$ on the Floer homology of $K \sqcup \tau K^r$. Note that $\tau$ interchanges the two components of $K \sqcup \tau K^r$ and reverses orientation on both; this parallels the fact that a strong inversion on a knot is orientation-reversing. We first give an actual definition of $\tau_\exch$ for the reader more familiar with knot Floer homology, followed by an explanation of how to compute $\tau_\exch$ in practice.

Let $w$ and $z$ be an ordered pair of basepoints on $K$. Fixing Heegaard data $\mathcal{H}$ compatible with $w$, $z$, and the orientation on $K$ defines the knot Floer complex $\CFK(K, w, z)$. There are three other closely-related sets of Heegaard data which we will need to consider:
\begin{enumerate}
    \item Keeping the same Heegaard splitting (including the designation of the $\alpha$- and $\beta$-curves and the orientation of the Heegaard surface) but interchanging the roles of $w$ and $z$ gives a set of Heegaard data for the reverse of $K$. Denote the corresponding knot Floer complex by $\CFK(K^r, z, w)$.
    \item The pushforward of $\mathcal{H}$ under $\tau$ forms a set of Heegaard data for $(\tau K, \tau w, \tau z)$. Denote the corresponding knot Floer complex by $\CFK(\tau K, \tau w, \tau z)$. 
    \item Taking the pushforward of $\mathcal{H}$ under $\tau$ and then interchanging the roles of $\tau w$ and $\tau z$ gives a set of Heegaard data for $(\tau K^r, \tau z, \tau w)$. Denote the corresponding knot Floer complex by $\CFK(\tau K^r, \tau z, \tau w)$.
\end{enumerate}
These complexes are all isomorphic to each other in different ways. Firstly, note that $\tau$ induces a tautological isomorphism from $\CFK(K, w, z)$ to $\CFK(\tau K, \tau w, \tau z)$, and likewise an isomorphism from $\CFK(\tau K^r, \tau z, \tau w)$ to $\CFK(K^r, z, w)$. Using this, define a map
\[
t \colon \CFK(K,w,z) \otimes \CFK( \tau K^r,\tau z, \tau w)  \rightarrow \CFK(K^{r},z,w) \otimes \CFK(\tau K, \tau w, \tau z)
\]
which takes the first factor on the left isomorphically onto the second factor on the right, and the second factor on the left isomorphically onto the first factor on the right.

Secondly, observe that we have a tautological map $\sw \colon \CFK(K^r, z, w) \rightarrow \CFK(K, w, z)$ given by mapping intersection-points to intersection-points. Unlike the previous isomorphisms, this is skew-graded and skew-equivariant. We call $\sw$ the switch map; see \cite[Definition 3.1]{dms} for further discussion. 
Similarly, there is a switch map from $\CFK(\tau K, \tau w, \tau z)$ to $\CFK(\tau K^r, \tau z, \tau w)$. Define
\[
\sw \otimes \sw \colon \CFK(K^{r},z,w) \otimes \CFK(\tau K,\tau w,\tau z) \rightarrow \CFK(K,w,z) \otimes \CFK(\tau K^{r},\tau z,\tau w)
\]
by taking the switch map in both factors. We then define $\tau_\exch$ to be the composition
\[
\tau_\exch = (\sw \otimes \sw) \circ t. 
\]
Note that $\tau_\exch$ is skew-graded and skew-equivariant. Roughly speaking, $t$ exchanges the complexes of $K$ and $\tau K^r$ using the tautological pushforward under $\tau$, but we must apply $\sw \otimes \sw$ to account for the fact that $\tau$ is orientation-reversing on each component.

In practice, computing the action of $\tau_\exch$ is quite straightforward. Start with a knot Floer complex $\CFK(K) = \CFK(K,w,z)$ for $K$, as schematically depicted on the left in Figure~\ref{fig:exchange}. Reflecting this complex over a diagonal line gives the complex $\CFK(K^r) = \CFK(K^r, z, w)$ for $K^r$. This is the same (via the tautological pushforward) as the complex $\CFK(\tau K^r) = \CFK(\tau K^r, \tau z, \tau w)$ for $\tau K^r$. Note that the switch map isomorphism from $\CFK(K)$ to $\CFK(K^r)$ given by reflection over the diagonal line is indeed a skew-equivariant, skew-graded chain map. The action of $\tau_\exch$ on $\CFK(K) \otimes \CFK(\tau K^r)$ is then given by the interchange-and-reflect operation, as in the right of Figure~\ref{fig:exchange}.

\begin{figure}[hbt!]
    \centering
    \includegraphics[width=0.97\textwidth]{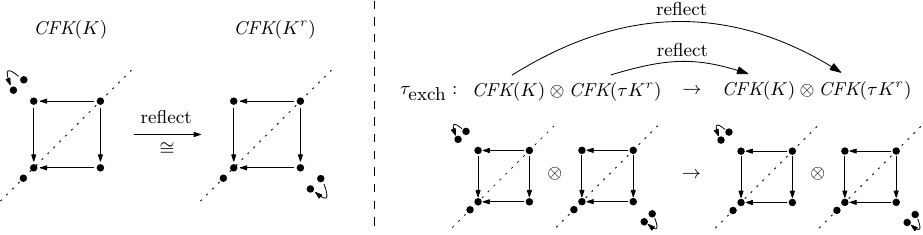}
    \caption{A schematic depiction of $\tau_{\exch}$. Here, we have deliberately chosen an asymmetric complex for $\CFK(K)$ to better illustrate the switch map from $\CFK(K)$ to $\CFK(K^r)$.}
\label{fig:exchange}
\end{figure}

The second subtlety in Theorem~\ref{thm:swapping} is the determination of the knot Floer involution $\iota_{\tau K^r}$ on $\CFK(\tau K^r) = \CFK(\tau K^r, \tau z, \tau w)$. In Figure~\ref{fig:exchange} we have identified $\CFK(\tau K^r, \tau z, \tau w)$ with the reflection of $\CFK(K, w, z)$. However, the map $\iota_{\tau K^r}$ should \textit{not} be identified with the reflection of the map $\iota_K$ on $\CFK(K, w, z)$. This is because the skew isomorphism
\[
\sw \colon \CFK(K, w, z) \rightarrow \CFK(K^r, z, w)
\]
does \textit{not} intertwine $\iota_K$ on the left with $\iota_{K^r}$ on the right. Instead, it follows from arguments as in \cite[Lemma 3.4]{dms} or \cite[Lemma 3.8]{dms} that $\sw$ intertwines $\iota_K$ on the left with $\varsigma_{K^r} \circ \iota_{K^r}$ on the right, where $\varsigma_{K^r}$ is the Sarkar map (see below). Indeed, for any oriented knot $K$, let $\rho_K$ be the pushforward isomorphism on the knot Floer complex induced by the half-Dehn twist along the orientation of $K$; denote the half-Dehn twist in the opposite direction by $\bar{\rho}_K$. Recall from \cite[Section 6]{hm} that $\iota_K$ is defined by composing the conjugation map $\eta$ with $\rho_K$ (followed by a naturality map returning to the original Heegaard data). By \cite[Lemma 3.4]{dms},

\[
\iota_{K^r} \circ \sw = (\rho_{K^r} \circ \eta) \circ \sw = \sw \circ (\bar{\rho}_K \circ \eta) \simeq \sw \circ (\bar{\rho}_K^2) \circ (\rho_K \circ \eta) = \sw \circ (\varsigma_K^{-1} \circ \iota_K).
\]
Roughly speaking, the point is that $\iota_K$ is defined using a half-Dehn twist following the orientation of $K$. However, from the point of view of $K^r$, this half-Dehn twist goes \textit{against} the orientation of the knot in question; see \cite[Section 3.1]{dms} for further discussion. Using the fact that $\varsigma_K$ is its own (homotopy) inverse, this shows 
\begin{equation}\label{equ: iota_ori_relation}
\iota_{K^r} \simeq \sw \circ (\varsigma_K \circ \iota_K) \circ \sw.
\end{equation}
Finally, we briefly remind the reader of the meaning of the maps $\Psi$, $\Phi$, and $\varsigma_K$. The first two of these were introduced by Zemke in \cite[Section 3]{zemke-stabilization} and are the formal derivatives of $\partial$ with respect to $\cU$ and $\cV$:
\[
\Phi = \left( \dfrac{d}{d\cU} \partial \right) \quad \text{and} \quad \Psi = \left( \dfrac{d}{d\cV} \partial \right).
\]
The map $\varsigma_K$ is the Sarkar basepoint-moving map \cite{sarkar} on the oriented knot $K$. In \cite[Theorem B]{zemke-stabilization}, it is shown that
\[
\varsigma_K \simeq \id + \Phi \circ \Psi.
\]
In the statement of Theorem~\ref{thm:swapping}, $\Psi$ acts on $\CFK(K)$ while $\Phi$ acts on $\CFK(\tau K^r)$. The map $\varsigma_{\otimes}$ is the Sarkar map on the entire tensor product complex $\CFK(K) \otimes \CFK(\tau K^r)$; this means that (following Zemke \cite{zemke2019connected}) we define $\varsigma_{\otimes}$ to be
\[
\varsigma_{\otimes} := \id_{\otimes} + \Phi_{\otimes} \circ \Psi_{\otimes}.
\]

We are now finally ready to put everything together. Let $K = 4_1$. As shown in \cite[Section 8.2]{hm}, the knot Floer complex $\CFK(K)$ for $K$ (together with the action of $\iota_K$) is homotopy equivalent to the complex displayed on the left in Figure~\ref{fig:factors}. Following the above discussion, we reflect this complex across a diagonal line to obtain $\CFK(\tau K^r)$. The action of $\iota_{\tau K^r}$ is displayed on the right in Figure~\ref{fig:factors}, which is computed using (\ref{equ: iota_ori_relation}).

\begin{figure}[hbt!]
    \centering
    \includegraphics[width=0.65\textwidth]{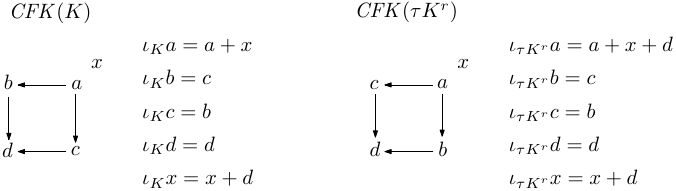}
    \caption{The knot Floer complex $\CFK(K)$ (left) and $\CFK(\tau K^r)$ (right). Horizontal and vertical arrows denote $\cU$ and $\cV$ terms in the differential. We label the generators by $\{x, a, b, c, d\}$ in both cases, so that $\tau_{\exch}(p \otimes q) = q \otimes p$. On the left, the gradings of these are as follows: $\gr(x) = \gr(a) = \gr(d) = (0,0)$, $\gr(b) = (1, -1)$, and $\gr(c) = (-1, 1)$. The gradings on the right are the same, except $\gr(b) = (-1, 1)$ and $\gr(c) = (1, -1)$.}
\label{fig:factors}
\end{figure}

We may then take the tensor product of $\CFK(K)$ and $\CFK(\tau K^r)$ and perform the algebra of Theorem~\ref{thm:swapping} to compute $\tau_\sw$ and $\iota_{K \# \tau K^r}$. In Figure~\ref{fig:chaincomplex}, we have displayed an especially convenient basis for $\CFK(K) \otimes \CFK(\tau K^r)$, which will be useful presently.

However, since we will not actually need to know the actions of $\tau_\sw$ and $\iota_{K \#  \tau K^r}$ on the entire complex, we suppress the full computation. Instead, it will suffice for us to discuss the action on the large surgery subcomplex, which we do below.

\begin{figure}[hbt!]
    \centering
    \includegraphics[width=0.5\textwidth]{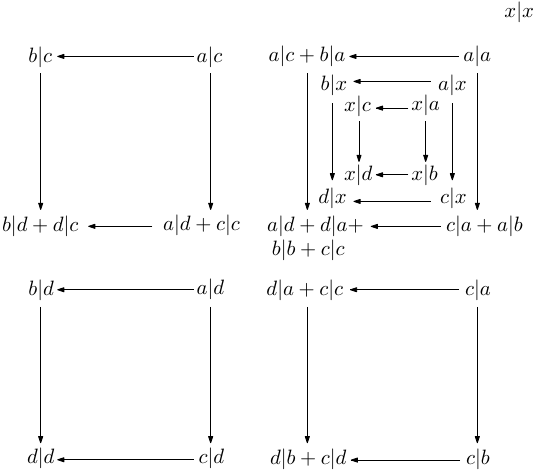}
    \caption{The knot Floer complex of $K\# K^r$. Horizontal and vertical arrows denote $\cU$ and $\cV$ terms in the differential.}
\label{fig:chaincomplex}
\end{figure}

\begin{rem}
The reader familiar with involutive knot Floer homology may note that the connected sum involution $\iota_\otimes$ in Theorem~\ref{thm:swapping} appears to be different from the connected sum formula of \cite[Theorem 1.1]{zemke2019connected}. This is not actually material: it follows from \cite[Lemma 2.21]{dms} that defining
\[
\tau_\otimes' = \varsigma_\otimes \circ (\id \otimes \id + \Psi \otimes \Phi) \circ \tau_\exch
\]
and
\[
\iota_\otimes' = (\id \otimes \id + \Psi \otimes \Phi) \circ (\iota_K \otimes \iota_{\tau K^r}).
\]
results in a homotopy equivalent triple, bringing us into alignment with \cite[Theorem 1.1]{zemke2019connected}. Likewise, the reader may wonder why the formula for $\tau_\otimes$ is not simply $\tau_\exch$. As in \cite[Theorem 1.1]{zemke2019connected}, this is due to a subtlety involving the definition of $\tau_K$ and the choice of decoration for the usual pair-of-pants cobordism from $K \sqcup \tau K^r$ to $K \# \tau K^r$; see \cite[Section 4.2]{dms}. In fact, one can show that taking $\tau_\otimes = \tau_\exch$ does not satisfy the commutation relation with $\iota_\otimes$ required by \cite[Theorem 1.7]{dms}, unlike the expression for $\tau_\otimes$ in Theorem~\ref{thm:swapping}. 
\end{rem}

\subsection{The large surgery formula}
For any knot $K$, let $A_0(K)$ denote the subset of $\CFK(K)$ lying in Alexander grading $A = (\gr_U - \gr_V)/2 = 0$. We think of this as a singly-graded chain complex over $\ff[U]$, where $U = \cU \cV$ and the Maslov grading is given by $\gr_U = \gr_V$. It is straightforward to check that $\iota_K$ and $\tau_K$ both preserve $A_0(K)$. We may thus view both of these as $\ff[U]$-equivariant, grading-preserving chain automorphisms on $A_0(K)$, which (by abuse of notation) we continue to denote by $\iota_K$ and $\tau_K$. 

The large surgery formula of Ozsv{\'a}th and Szab{\'o} \cite{OS-knots} (see also \cite{rasmussen-knot-floer}) states that if $n \geq g_3(K)$, then we have a homotopy equivalence of $\ff[U]$-complexes
\[
\Gamma_{n, 0} \colon \CFm(S^3_n(K), \s_0) \xrightarrow{\simeq} A_0(K).
\]
Here, $\s_0$ denotes the (self-conjuate) $\spinc$-structure on $S^3_n(K)$ which corresponds to the element $0\in \mathbb{Z}/n\mathbb{Z}$ under the identification defined in \cite[Lemma 2.2]{ozsvath2008knot}. This homotopy equivalence is relatively graded with grading shift $-(n-1)/4$.\footnote{That is, an element of $\CFm(S^3_n(K), \s_0)$ with grading $(n-1)/4$ has Maslov grading zero in $A_0(K)$.} Moreover, by \cite[Theorem 6.8]{hm} and \cite[Theorem 1.1]{mallick-surgery}, $\Gamma_{n, 0}$ intertwines $\iota$  with $\iota_K$ and $\tau$ with $\tau_K$ (up to homotopy), so that we get a homotopy equivalence of triples
\[
\Gamma_{n, 0} \colon (\CFm(S^3_n(K), \s_0), \tau, \iota) \xrightarrow{\simeq} (A_0(K), \tau_K, \iota_K).
\]
See the proof of \cite[Corollary 1.3]{mallick-surgery}.

We now return to the case of $K = 4_1$. In light of the large surgery formula, we restrict our attention to the large surgery subcomplex $A_0(K \# K^r)$. In fact, it turns out that for our purposes it suffices to understand the actions of $\tau_{\sw}$ and $\iota_{K \# K^r}$ on the homology of this complex. From Figure~\ref{fig:chaincomplex}, it is straightforward to verify that $H_*(A_0(K \# K^r))$ is generated by the classes
\[
[x|x], \quad [x|d], \quad [d|x], \quad [a|d+d|a+b|b+c|c], \quad \text{and} \quad [d|d],
\]
all lying in Maslov grading zero. (We temporarily use $|$ in place of $\otimes$ for brevity.) The first of these is $U$-nontorsion, while the others have $U$-torsion order one. In Table~\ref{table:computation}, we have computed the actions of $\iota$ and $\tau$ on these using Theorem~\ref{thm:swapping}. To illustrate the calculation, we do the most complicated case of $a|d + d|a + b|b + c|c$ and leave the other entries to the reader. We have:
\begin{align*}
    (\iota_K | \iota_{\tau K^r})(a|d + d|a + b|b + c|c) &= (a + x)|d + d|(a + x + d) + c|c + b|b \\
    &= a|d + d|a + b|b + c|c + x|d + d|x + d|d.
\end{align*}
Among these terms, the action of $\Psi|\Phi$ vanishes outside of $(\Psi | \Phi)(b|b) = d|d$. The action of $\varsigma_\otimes$ is then easily checked to be the identity; hence $\iota_\otimes = \varsigma_\otimes \circ (\id | \id + \Psi | \Phi) \circ (\iota_K | \iota_{\tau K^r})$ is as displayed on the fourth row of Table~\ref{table:computation}. Similarly,
\begin{align*}
    \tau_\exch(a|d + d|a + b|b + c|c) &= d|a + a|d + b|b + c|c.
\end{align*}
The same computation of $\Psi|\Phi$ then gives the action of $\tau_\otimes = (\id | \id + \Phi | \Psi) \circ \tau_\exch$ in the fourth row of Table~\ref{table:computation}.

\begin{table}
\begin{tabular}{|c||c|c|c|c|}
 \hline
 Cycles & Homology class & Image under $\iota$ & Image under $\tau$ \\
 \hline
 $x|x$ & free & $x|x+x|d+d|x+d|d$ & $x|x$ \\
 $x|d$ & $U$-torsion & $x|d+d|d$ & $d|x$ \\
 $d|x$ & $U$-torsion & $d|x+d|d$ & $x|d$ \\
 $a|d+d|a+b|b+c|c$ & $U$-torsion & $a|d+d|a+b|b+c|c+x|d+d|x$ & $a|d+d|a+b|b+c|c+d|d$ \\
 $d|d$ & $U$-torsion & $d|d$ & $d|d$ \\
 \hline
\end{tabular}
\captionsetup{justification=centering}
\caption{Actions of $\iota$ and $\tau$ on $H_*(A_0(4_1 \#
4_1^r))$.}\label{table:computation}
\end{table}

The crucial observation is now the following:
\begin{lem}\label{lem:towers}
There is no $U$-nontorsion class in $H_*(A_0(4_1 \# 4_1^r))$ which has Maslov grading zero and is invariant under both $\iota$ and $\tau$. 
\end{lem}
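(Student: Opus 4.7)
The computation is essentially linear algebra over $\mathbb{F}_2$ once Table~\ref{table:computation} is in hand. The plan is as follows.

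First, I would observe that $H_*(A_0(4_1 \# 4_1^r))$ in Maslov grading zero is spanned by the five classes listed in Table~\ref{table:computation}, with $[x|x]$ the unique $U$-nontorsion generator and the remaining four classes $[x|d]$, $[d|x]$, $[a|d+d|a+b|b+c|c]$, and $[d|d]$ being $U$-torsion. Any $U$-nontorsion class of Maslov grading zero is therefore of the form
\[
\alpha = [x|x] + \lambda_1 [x|d] + \lambda_2 [d|x] + \lambda_3 [a|d+d|a+b|b+c|c] + \lambda_4 [d|d]
\]
for some $\lambda_i \in \mathbb{F}_2$. The goal is to show that the system imposed by $\iota(\alpha) = \alpha$ and $\tau(\alpha) = \alpha$ has no solution.

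Second, using the third column of Table~\ref{table:computation}, I would expand $\iota(\alpha)$ and match coefficients against $\alpha$. The coefficient of $[x|d]$ (equivalently $[d|x]$) produces the relation $\lambda_3 = 1$, since the $\iota$-image of $[x|x]$ contributes a $[x|d]$ and so does the $\iota$-image of $[a|d+d|a+b|b+c|c]$. The coefficient of $[d|d]$ gives the auxiliary relation $\lambda_1 + \lambda_2 = 1$, which I will not actually need, and the other coefficients are automatically satisfied.

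Third, using the fourth column of Table~\ref{table:computation}, I would expand $\tau(\alpha)$ and again match coefficients against $\alpha$. The coefficient of $[d|d]$ produces the relation $\lambda_3 = 0$, since $\tau$ sends $[a|d+d|a+b|b+c|c]$ to itself plus $[d|d]$ while every other generator contributes no $[d|d]$ term on $\tau$. The coefficients of $[x|d]$ and $[d|x]$ yield the symmetry condition $\lambda_1 = \lambda_2$.

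Finally, comparing these, $\iota$-invariance forces $\lambda_3 = 1$ while $\tau$-invariance forces $\lambda_3 = 0$, a contradiction. Hence no such class $\alpha$ exists. The entire argument is routine once Table~\ref{table:computation} is verified; the only conceptual content is the observation that the two involutions are in tension precisely on the coefficient of the ``diagonal'' $U$-torsion class $[a|d + d|a + b|b + c|c]$, and there is no main obstacle beyond reading off the correct entries of the table.
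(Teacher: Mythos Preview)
Your proof is correct and follows essentially the same approach as the paper: both arguments are linear algebra over $\ff$ read off from Table~\ref{table:computation}. The paper first computes the full $\iota$-invariant subspace and then restricts $\tau$ to it, whereas you fix the $[x|x]$-coefficient to $1$ from the outset and derive the contradiction $\lambda_3 = 1$ (from $\iota$) versus $\lambda_3 = 0$ (from $\tau$) directly; this is a minor and equally valid reorganization of the same computation.
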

\begin{proof}
With respect to the ordered basis $\{[x|x],[x|d],[d|x],[a|d+d|a+b|b+c|c],[d|d]\}$, the action of $\id +\iota$ is given by the matrix
\[
\id +\iota = \begin{pmatrix}
0 & 0 & 0 & 0 & 0 \\ 1 & 0 & 0 & 1 & 0 \\ 1 & 0 & 0 & 1 & 0 \\ 0 & 0 & 0 & 0 & 0 \\ 1 & 1 & 1 & 0 & 0
\end{pmatrix}.
\]
Hence we see that the $\iota$-invariant subspace of $H_*(A_0(4_1 \# 4_1^r))$ is generated by 
\[
[x|x]+[x|d]+[a|d+d|a+b|b+c|c], \quad [x|d]+[d|x], \quad \text{and} \quad [d|d].
\]
Among these, the homology classes $[x|d]+[d|x]$ and $[d|d]$ are $\tau$-invariant, while we have
\[
(\id+\tau)([x|x]+[x|d]+[a|d+d|a+b|b+c|c]) = [x|d] + [d|x] + [d|d].
\]
Hence the $(\tau, \iota)$-invariant subspace of $H_{\ast}(A_0(4_1 \# 4_1^r))$ is generated by $[x|d] + [d|x]$ and $[d|d]$, both of which are annihilated by $U$.
\end{proof}

\section{Reduction to small surgery}\label{sec:4}
We now present the construction which will allow us to pass from large surgery to small surgery. Let $K$ be any strongly invertible knot. For any $n > 0$, define a cobordism $W_{1, n}$ whose incoming end is $S^3_{+1}(K)$ by attaching 2-handles along $(-1)$-framed meridians of $K$, as indicated in Figure~\ref{fig:oddcobordism}. We denote these meridians by $x_1, \ldots, x_{n-1}$. Clearly, blowing down all of the $x_i$ shows that the outgoing end of the cobordism is diffeomorphic to $S^3_{n}(K)$. We can always arrange $W_{1, n}$ to be equivariant: if $n$ is odd, then all of the 2-handles may be divided into symmetric pairs, whereas if $n$ is even, then there is a single 2-handle which intersects the axis of symmetry.

\begin{figure}[hbt!]
    \centering
    \includegraphics[width=0.23\textwidth]{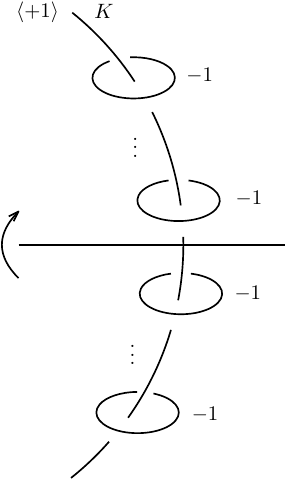}
    \caption{The cobordism $W_{1,n}$, formed by attaching $(-1)$-framed $2$-handles along $n-1$ meridians of $K$.}
\label{fig:oddcobordism}
\end{figure}

\begin{lem}\label{lem:intersectionform}
The cobordism $W_{1, n}$ is spin and negative definite.
\end{lem}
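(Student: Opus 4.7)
The plan is to close off the incoming end of $W_{1,n}$ with a standard handlebody, compute the intersection form of the resulting closed-off 4-manifold directly from its Kirby diagram, and then extract the intersection form of $W_{1,n}$ using the orthogonal splitting coming from the separating $\ZZ$-homology sphere $S^3_{+1}(K)$. Concretely, let $X_1$ be the 4-manifold obtained from $B^4$ by attaching a single 2-handle along $K$ with framing $+1$, so that $\partial X_1 = S^3_{+1}(K)$, and set $V_n = X_1 \cup_{S^3_{+1}(K)} W_{1,n}$. Then $V_n$ is a handlebody with $n$ two-handles: one along $K$ with framing $+1$ and $n-1$ along the meridians $x_i$ with framing $-1$; blowing down the meridians, as already observed, yields $\partial V_n = S^3_n(K)$.

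From the handle picture, the intersection form of $V_n$ on $H_2(V_n;\ZZ) \cong \ZZ^n$ is given by the linking matrix
\[
L = \begin{pmatrix} 1 & 1 & 1 & \cdots & 1 \\ 1 & -1 & 0 & \cdots & 0 \\ 1 & 0 & -1 & \cdots & 0 \\ \vdots & \vdots & \vdots & \ddots & \vdots \\ 1 & 0 & 0 & \cdots & -1 \end{pmatrix},
\]
where the off-diagonal $+1$'s encode that each $x_i$ links $K$ once. Since $S^3_{+1}(K)$ is an integer homology sphere, Mayer--Vietoris applied to $V_n = X_1 \cup W_{1,n}$ produces an orthogonal splitting $H_2(V_n;\ZZ) \cong H_2(X_1;\ZZ) \oplus H_2(W_{1,n};\ZZ)$, so the intersection form of $W_{1,n}$ is identified with the restriction of $L$ to the orthogonal complement of $[K] \in H_2(X_1;\ZZ)$. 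I would then check that $\{[x_i] - [K]\}_{i=1}^{n-1}$ is a $\ZZ$-basis of this sublattice with Gram matrix $-I_{n-1} - J_{n-1}$, where $J_{n-1}$ is the $(n-1)\times(n-1)$ all-ones matrix. This matrix has eigenvalues $-1$ (multiplicity $n-2$) and $-n$ (multiplicity one), proving negative definiteness, and its diagonal entries all equal $-2$, so the form is even.

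The last step is to show that $W_{1,n}$ is spin, for which it suffices to combine evenness of the intersection form with simple-connectedness. For the latter, I would observe that the meridian of $K$ normally generates $\pi_1(S^3_{+1}(K))$, since attaching a single 2-handle along such a meridian undoes the surgery and recovers $S^3$. Hence attaching 2-handles along the $x_i$ kills $\pi_1(S^3_{+1}(K))$ entirely, giving $\pi_1(W_{1,n}) = 1$. A simply connected 4-manifold with even intersection form is spin, completing the argument.

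The only subtle point I expect is the integrality of the basis $\{[x_i]-[K]\}$ for the orthogonal complement in $H_2(V_n;\ZZ)$, as opposed to merely in $H_2(V_n;\Q)$. This is elementary but worth noting: the orthogonal complement of $[K]$ in $\ZZ^n$ under $L$ consists of integer vectors $(a_0, a_1, \ldots, a_{n-1})$ satisfying $a_0 + a_1 + \cdots + a_{n-1} = 0$, and these are exactly the integer combinations of the elements $[x_i] - [K]$.
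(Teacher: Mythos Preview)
Your proof is correct and follows the same route as the paper's: both compute the intersection form in the basis $\{x_i - K\}$ and obtain the Gram matrix $-I_{n-1}-J_{n-1}$ (the paper phrases this as a direct basis change without explicitly capping off with $X_1$, but the computation is identical). Your eigenvalue argument and simple-connectedness check supply more detail than the paper, which simply asserts definiteness from the signs and spinness from evenness; note that for the latter $H_1(W_{1,n})=0$ (immediate from $H_1(S^3_{+1}(K))=0$, since attaching $2$-handles cannot enlarge $H_1$) already suffices, and your aside that a $2$-handle on the meridian ``undoes the surgery and recovers $S^3$'' is framing-dependent---but the conclusion that $\mu$ normally generates $\pi_1(S^3_{+1}(K))$ holds regardless, directly from the Wirtinger presentation of $\pi_1(S^3\setminus K)$.
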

\begin{proof}
We calculate the intersection form of the cobordism. Orient the $x_i$ so that $x_i \cdot K = 1$. We isotope the attaching curves of the $2$-handles by sliding each $x_i$ over $K$ to obtain curves $x_i' = x_i - K$. By this, we mean that we isotope the $x_i$ by passing them through the surgery solid torus in $S^3_{+1}(K)$; alternatively, consider $S^3_{+1}(K)$ as the boundary of $(+1)$-framed $2$-handle attachment along $K$ and perform a $2$-handle slide. Since $x_i' \cdot K = 0$, the intersection form of $W_{1, n}$ may be computed by calculating the pairings $x_i' \cdot x_j'$. We have $x_i' \cdot x_i' = -2$ for all $i$, while $x_i' \cdot x_j' = -1$ for all $i \neq j$. Since both of these terms are negative, the cobordism is evidently negative definite; since the intersection form is even, the cobordism is spin.
\end{proof}

The existence of $W_{1, n}$ gives us partial information regarding the $\tau$- and $\iota$-actions on $S^3_{+1}(K)$ in terms of those on $S^3_n(K)$. More precisely, we have the following:

\begin{lem}\label{lem:cobordism}
Let $K$ be any strongly invertible knot. If $n$ is odd, then there is an $(\tau,\iota)$-local map
\[
f_{1, n} \colon \CFm(S^3_{+1}(K)) \rightarrow \CFm(S^3_n(K), \s_0)
\]
with absolute grading shift $(n-1)/4$. Here, $\s_0$ is the unique self-conjugate $\spinc$-structure on $S^3_n(K)$.
\end{lem}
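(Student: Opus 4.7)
The plan is to define $f_{1,n}$ as the Heegaard Floer cobordism map $F_{W_{1,n},\s_W}$ associated to the unique spin $\spinc$-structure $\s_W$ on $W_{1,n}$, and then verify the four defining properties of a $(\tau,\iota)$-local map one at a time. Existence and uniqueness of $\s_W$ are clear: the intersection form of $W_{1,n}$ is even by Lemma~\ref{lem:intersectionform}, and $W_{1,n}$ is simply connected (built from $S^3_{+1}(K)\times I$ by attaching $2$-handles), so both the spin structure and the associated spin $\spinc$-structure are unique.

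The grading shift follows directly from the standard formula with $\chi(W_{1,n})=n-1$, $\sigma(W_{1,n})=-(n-1)$, and $c_1(\s_W)=0$, yielding
\[
\frac{c_1(\s_W)^2 - 2\chi(W_{1,n}) - 3\sigma(W_{1,n})}{4} = \frac{-2(n-1)+3(n-1)}{4} = \frac{n-1}{4}.
\]
The $\iota$-equivariance is immediate from Hendricks-Manolescu's conjugation symmetry since $\s_W$ is self-conjugate (being spin); importantly, this statement applies to cobordism maps on general cobordisms, not only rational homology ones. For $\tau$-equivariance, the hypothesis that $n$ is odd is essential: it allows the $n-1$ meridians to be partitioned into $(n-1)/2$ symmetric pairs, so that $\tau$ on $S^3_{+1}(K)$ extends to a smooth involution $\tau_W$ on $W_{1,n}$ exchanging paired $2$-handles. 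Uniqueness of $\s_W$ then forces $\tau_W^{\ast}\s_W = \s_W$, and the argument of \cite[Section~6]{dhm}, which the authors explicitly note applies beyond rational homology cobordisms, gives the desired intertwining up to homotopy.

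The main obstacle lies in verifying the second condition, namely that $F^{\infty}_{W_{1,n},\s_W}$ is an isomorphism on $\HFinf$, since part (2) of Theorem~\ref{thm:local} is not directly available: $W_{1,n}$ has $b_2(W_{1,n})=n-1>0$ and is therefore not a rational homology cobordism for $n>1$. Both source and target of the $\HFinf$ map are rank-one free $\ff[U,U^{-1}]$-modules, so the map is either zero or an isomorphism and it suffices to rule out the former. One first checks that there is no grading obstruction by computing that $d(S^3_n(K),\s_0)-d(S^3_{+1}(K))=(n-1)/4$ independently of $K$ (the $V_0(K)$ contributions from the integer surgery formula cancel), so the grading shift of $f_{1,n}$ matches the $d$-invariant difference. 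The nontriviality then reduces to an analysis of the spin cobordism map on the negative definite $W_{1,n}$: the $\spinc$-structure $\s_W$ uniquely realizes the extremal value $c_1^2=0$ on the even intersection form of Lemma~\ref{lem:intersectionform}, and the standard Ozsv\'ath-Szab\'o analysis of cobordism maps for such extremal $\spinc$-structures on $b^+=0$ cobordisms between rational homology spheres yields that the induced map on $\HFinf$ is nontrivial, completing the verification.
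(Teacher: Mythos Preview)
Your proof is correct and takes essentially the same approach as the paper: set $f_{1,n}=F_{W_{1,n},\s_W}$ for the unique spin $\spinc$-structure, deduce $\tau$- and $\iota$-equivariance from uniqueness of $\s_W$ and the results of \cite{dhm}, and compute the grading shift from $\chi$ and $\sigma$.

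Two small corrections are worth recording. First, you misattribute the role of the hypothesis that $n$ is odd. The paper notes that $W_{1,n}$ can be made equivariant for \emph{every} $n$ (when $n$ is even, one meridian sits on the axis of symmetry), so oddness is not needed for $\tau$-equivariance. Rather, oddness guarantees that $S^3_n(K)$ has a \emph{unique} self-conjugate $\spinc$-structure, namely $\s_0$, so that the self-conjugate $\s_W$ is forced to restrict to it; you do not explicitly verify this restriction. Second, your discussion of condition~(2) is more elaborate than necessary and slightly muddled. The standard Ozsv\'ath--Szab\'o result is that $F^\infty_{W,\s}$ is an isomorphism for \emph{every} $\spinc$-structure on a cobordism with $b_1=b_2^+=0$ between rational homology spheres; no ``extremal $c_1^2$'' argument or $d$-invariant matching is required, and the paper simply takes this as understood.
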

\begin{proof}
By Lemma~\ref{lem:intersectionform}, we have that $W_{1, n}$ is spin and negative definite. Let $\s$ be the unique spin structure on $W_{1, n}$. (Note that the homology of $W_{1, n}$ has no torsion; hence it has a unique spin structure which moreover corresponds to a unique self-conjugate $\spinc$-structure.) Since $n$ is odd, we necessarily have $\s|_{S^3_n(K)} = \s_0$. As discussed above, $W_{1, n}$ may be taken to be equivariant, and by uniqueness, $\s$ is fixed by the involution on $W_{1, n}$. Hence the cobordism map
\[
F_{W_{1, n}, \s} \colon \CFm(S^3_{+1}(K)) \rightarrow \CFm(S^3_n(K), \s_0)
\]
homotopy commutes with $\tau$ (and $\iota$), as in \cite[Section 6]{dhm}. It remains to compute the resulting grading shift, which is easily seen to be 
\[
\dfrac{2 \sigma(W_{1,n}) + 3 \chi(W_{1,n})}{4} = \dfrac{n-1}{4}
\]
as desired.
\end{proof}

The application of Lemma~\ref{lem:cobordism} will become clear in the next section.

\section{Proof of Theorem \ref{thm:mainthm1}}\label{sec:5}


We now turn to the proof of the main theorem. First, we have the following obstruction to bounding an equivariant homology ball:
\begin{lem}\label{lem:easy-local-equivalence}
If $Y$ bounds an equivariant $\ZZ/2\ZZ$-homology ball, then $\HFm(Y, \s_0)$ admits a $U$-nontorsion class in Maslov grading zero which is invariant under both $\iota$ and $\tau$. Here, $\s_0$ is the unique self-conjugate $\spinc$-structure on $Y$.
\end{lem}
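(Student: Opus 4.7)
The plan is to directly invoke Theorem~\ref{thm:local} applied to (a punctured version of) the hypothesized equivariant homology ball. Suppose $W$ is a $\ZZ/2\ZZ$-homology ball with $\partial W = Y$, equipped with a smooth involution $\tau_W$ extending $\tau$. Puncturing $W$ produces a rational homology cobordism from $S^3$ to $Y$, which I will also denote by $W$ by abuse of notation; the involution $\tau_W$ restricts to the identity on the $S^3$ boundary (after an equivariant isotopy, which we may arrange since any involution on $B^3$ fixing the boundary pointwise is isotopic to the identity through such involutions, or simply by choosing the puncture point on the fixed set of $\tau_W$). Since $W$ is a $\ZZ/2\ZZ$-homology cobordism, it carries a unique self-conjugate $\spinc$-structure $\s$, and this $\s$ must be preserved by $\tau_W$ by uniqueness. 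Moreover, $\s|_{S^3}$ is the unique $\spinc$-structure on $S^3$ and $\s|_Y = \s_0$.

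By Theorem~\ref{thm:local} applied to this cobordism, the induced Heegaard Floer map
\[
F_{W,\s} \colon \CFm(S^3) \longrightarrow \CFm(Y, \s_0)
\]
is a $(\tau,\iota)$-local map: it is grading-preserving, it induces an isomorphism on $U$-localized homology, and it homotopy-commutes with both $\iota$ and $\tau$. Recall that under the normalization of gradings used in this paper, the uppermost generator of $\CFm(S^3) \cong \ff[U]$ lies in Maslov grading zero, and both the $\iota$- and $\tau$-actions on $\CFm(S^3)$ are (homotopic to) the identity.

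Now consider the homology class $\alpha = [F_{W,\s}(1)] \in \HFm(Y, \s_0)$, where $1$ denotes the generator of $H_*(\CFm(S^3))$ in Maslov grading zero. Property (1) of Theorem~\ref{thm:local} ensures $\alpha$ lies in Maslov grading zero. Property (2) ensures that $F_{W,\s}$ induces an isomorphism $\HFinf(S^3) \to \HFinf(Y, \s_0)$, and since the image of $1$ in $\HFinf(S^3)$ is a generator of the $\ff[U,U^{-1}]$-tower, the class $\alpha$ must survive under localization, i.e. is $U$-nontorsion. Finally, properties (3) and (4) give $\iota_Y(\alpha) = [F_{W,\s}(\iota_{S^3}(1))] = \alpha$ and similarly $\tau_Y(\alpha) = \alpha$ at the level of homology, since $\iota$ and $\tau$ act trivially on the canonical class $1 \in H_*(\CFm(S^3))$.

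There is no serious obstacle here; the only mildly subtle point is making sure the puncturing procedure genuinely yields a cobordism on which $\tau_W$ still acts, and that $\s$ is $\tau_W$-invariant. Both are immediate: $\tau_W$-invariance of $\s$ follows from uniqueness of the self-conjugate $\spinc$-structure on a $\ZZ/2\ZZ$-homology cobordism, and the puncture can be chosen compatibly (or replaced with a small equivariant neighborhood of a fixed point) so that $W \setminus B^4$ still carries $\tau_W$ as a smooth involution restricting to the identity on the $S^3$ end after conjugation by an isotopy. With those points handled, the existence of the desired invariant $U$-nontorsion class in Maslov grading zero is an immediate consequence of Theorem~\ref{thm:local}.
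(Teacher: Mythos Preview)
Your proof is correct and follows essentially the same approach as the paper's: puncture the equivariant homology ball, apply Theorem~\ref{thm:local} to obtain a $(\tau,\iota)$-local map $F_{W,\s}\colon\CFm(S^3)\to\CFm(Y,\s_0)$, and take the image of the generator $1$. One minor imprecision worth flagging: puncturing at a fixed point of $\tau_W$ does not in general make $\tau_W|_{S^3}$ the identity \emph{diffeomorphism} (by linearization it will be a nontrivial orthogonal involution whenever the fixed set has positive codimension, as for a branching involution), but this is harmless since any grading-preserving $\ff[U]$-equivariant automorphism of $\CFm(S^3)=\ff[U]$ is forced to be the identity, which is all the argument needs.
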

\begin{proof}
Let $W$ be an equivariant $\ZZ/2\ZZ$-homology ball with boundary $Y$. Let $\s$ be the unique self-conjugate $\spinc$-structure on $W$. Puncturing $W$ gives an equivariant $\ZZ/2\ZZ$-homology cobordism, and it follows from Theorem~\ref{thm:local} that the map associated to this cobordism
\[
F_{W, \s} \colon \CFm(S^3) \rightarrow \CFm(Y, \s_0)
\]
is local. The image $F_{W, \s}(1)$ gives the desired class.
\end{proof}

This immediately shows that $(4_1)_{2,1}$ is not slice:

\begin{proof}[Proof of \Cref{thm:mainthm1}]
Suppose that $(4_1)_{2,1}$ were slice. Then $Y_1 = S^3_{+1}(4_1 \# 4_1^r)$ bounds an equivariant $\ZZ/2\ZZ$-homology ball. By Lemma~\ref{lem:easy-local-equivalence}, this gives a $U$-nontorsion class in $\HFm(Y_1)$ which has Maslov grading zero and is invariant under both $\iota$ and $\tau$. Denote this class by $x_1$. According to Lemma~\ref{lem:cobordism}, we have a $(\tau, \iota)$-local map from $Y_1$ to the large surgery $Y_n = S^3_{n}(4_1 \# 4_1^r)$:
\[
(f_{1,n})_{*} \colon \HFm(Y_1) \rightarrow \HFm(Y_n, \s_0).
\]
This has grading shift $(n-1)/4$. On the other hand, the isomorphism
\[
(\Gamma_{n, 0})_{*} \colon \HFm(Y_n, \s_0) \xrightarrow{\simeq} H_*(A_0(4_1 \# 4_1^r))
\]
has grading shift $-(n-1)/4$ and is both $\tau$- and $\iota$-equivariant. The class $(\Gamma_{n,0} \circ f_{1,n})_{*}(x_1)$ then contradicts Lemma~\ref{lem:towers}, completing the proof.

The argument that $(4_1)_{2,1}$ is nontorsion is similar. Suppose $(4_1)_{2,1}$ had finite order in the concordance group. Then $\#_N(4_1)_{2,1}$ is smoothly slice for some $N > 0$. Taking branched double covers shows that
\[
\Sigma_2(\#_N (4_1)_{2,1}) = \#_N \Sigma_2((4_1)_{2,1}) = \#_N S^3_{+1}(4_1 \# 4_1^r)
\]
bounds an equivariant $\ZZ/2\ZZ$-homology ball. (Here, the connected sum $\#_N S^3_{+1}(4_1 \# 4_1^r)$ is done in the obvious equivariant manner.) It thus suffices to prove that $\CFm(\#_N Y_1)$ admits no local map from the trivial complex. For this, we assume the reader has some familiarity with the language of local equivalence, as discussed in \cite[Section 9]{hmz}. The desired claim follows easily from the fact that $\CFm(Y_1)$ admits a local map \textit{into} the trivial complex; we prove this below in Lemma~\ref{lem:localmap}.

Indeed, let $F \colon \CFm(Y_1) \rightarrow \CFm(S^3)$ be such a map. Dualizing gives a local map $G \colon \CFm(S^3) \rightarrow \CFm(Y_1)^\vee$. If we had a local map from $\CFm(S^3)$ to $\CFm(\#_N Y_1)$, then tensoring this with $\otimes^{N-1} G$ and applying the connected sum formulas of \cite[Theorem 1.1]{hmz} and \cite[Proposition 6.8]{dhm} would give a local map from $\CFm(S^3)$ to $\CFm(Y_1)$, which we already know is a contradiction.
\end{proof}


\begin{lem}
\label{lem:localmap}
There exists a local map from $CF^-(S^3 _{+1}(4_1\# 4_1^r))$ to the trivial complex.
\end{lem}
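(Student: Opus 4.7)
The plan is to construct $F$ as the composition of three pieces: the equivariant cobordism map from Lemma \ref{lem:cobordism}, the large surgery identification, and an explicit algebraic projection onto the $U$-nontorsion tower of $A_0(4_1 \# 4_1^r)$. Fix an odd integer $n \geq g_3(4_1 \# 4_1^r)$. By Lemma \ref{lem:cobordism} there is a $(\tau,\iota)$-local map $f_{1, n} : \CFm(Y_1) \to \CFm(Y_n, \s_0)$ of grading shift $(n-1)/4$, and the $(\tau, \iota)$-equivariant large surgery identification $\Gamma_{n,0} : \CFm(Y_n, \s_0) \xrightarrow{\simeq} A_0(4_1 \# 4_1^r)$ discussed in Section \ref{sec:3} has grading shift $-(n-1)/4$. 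Their composition is $(\tau, \iota)$-local and grading-preserving, so it suffices to construct a $(\tau, \iota)$-local map $\pi : A_0(4_1 \# 4_1^r) \to \ff[U]$ of grading shift zero and set $F = \pi \circ \Gamma_{n, 0} \circ f_{1, n}$.

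I would define $\pi$ by declaring $\pi(x \otimes x) = 1$ and $\pi(y) = 0$ on every other $\ff[\cU, \cV]$-basis element $y$ of $\CFK(4_1) \otimes \CFK(\tau 4_1^r)$, extended $\ff[U]$-linearly. Inspecting Figure \ref{fig:factors}, the element $x$ is an isolated cycle in $\CFK(4_1)$ with no differentials entering or leaving it, so $x \otimes x$ is a cycle in the tensor product that does not appear in the image of any differential. Hence $\pi$ is a well-defined, grading-preserving, $\ff[U]$-linear chain map, and it induces the $U$-localized isomorphism because $[x \otimes x]$ is the $U$-nontorsion generator of $H_\ast(A_0(4_1 \# 4_1^r))$ in Maslov grading zero.

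To verify $(\tau, \iota)$-equivariance up to homotopy, first note that Table \ref{table:computation} gives $\tau_\ast[x \otimes x] = [x \otimes x]$ and $\iota_\ast[x \otimes x] = [x \otimes x] + [x \otimes d] + [d \otimes x] + [d \otimes d]$, where the latter three summands are $U$-torsion; the $\tau_\ast$- and $\iota_\ast$-images of every other homology generator listed there are also $U$-torsion. Since $\pi_\ast$ annihilates every $U$-torsion class, we obtain $\pi_\ast \circ \iota_\ast = \pi_\ast = \pi_\ast \circ \tau_\ast$ on homology. To upgrade this to chain-level homotopies $\pi \iota \simeq \pi$ and $\pi \tau \simeq \pi$, one computes $\iota(y)$ and $\tau(y)$ using the tensor-product formulas of Theorem \ref{thm:swapping} for each of the finitely many Maslov-grading-zero $\ff[U]$-generators $y$ of $A_0(4_1 \# 4_1^r)$, and exhibits an explicit null-homotopy $s$ for $\pi \iota - \pi$ (resp.\ $\pi \tau - \pi$) generator by generator, using the form of $\iota_K$, $\varsigma$, $\Phi$, and $\Psi$ on $\CFK(4_1)$ displayed in Figure \ref{fig:factors}.

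The step I expect to require the most care is this last chain-level verification: although the homology identity is immediate from Table \ref{table:computation}, individual non-cycle generators may receive a nonzero $x \otimes x$-coefficient under $\iota$ or $\tau$, and the corresponding null-homotopy must be built explicitly using the specific algebraic structure of $\CFK(4_1) \otimes \CFK(\tau 4_1^r)$ in Alexander grading zero.
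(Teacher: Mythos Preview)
Your overall architecture coincides with the paper's: both factor the desired local map as $\pi \circ \Gamma_{n,0} \circ f_{1,n}$, where $\pi$ projects $A_0(4_1 \# 4_1^r)$ onto the tower generated by $x\otimes x$. The divergence is in how $(\tau,\iota)$-equivariance of $\pi$ is verified.

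The paper replaces $A_0$ by a minimal $\ff[U]$-model with nine generators $\{x,a,b,c,d,e,f,g,h\}$, five cycles in grading $0$ and four ``killers'' $a,c,e,g$ in grading $-1$ with $\partial a = Ub$, etc. On this model the actions of $\iota$ and $\tau$ are \emph{forced} by their action on homology: the grading-$0$ part consists entirely of cycles, and in grading $-1$ the differential is injective, so the chain-map condition $\partial\iota(a)=U\iota(b)$ pins down $\iota(a)$ uniquely. With $\iota,\tau$ written out explicitly, one checks $F\iota=F$ and $F\tau=F$ on the nose---no homotopy is needed.

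Your version stays in the tensor basis and defers the construction of the null-homotopy. Two remarks:
\begin{itemize}
\item For $\tau$ no homotopy is needed at all. Since $x$ is isolated in each factor, $\Psi_K$ and $\Phi_{K^r}$ never output $x$, so $\tau_\otimes$ hits $x\otimes x$ only from $x\otimes x$; thus $\pi\tau_\otimes=\pi$ exactly.
\item For $\iota$ a homotopy really is required in your basis (e.g.\ $\iota_\otimes(a\otimes a)$ has nonzero $x\otimes x$-coefficient because $\iota_K(a)=a+x$). But you need not build it by hand: on the minimal model every degree-$0$ chain map $M\to\ff[U]$ is determined by its value on $x$ (the generators $a_i$ sit in odd grading, and the chain-map condition kills $g(b_i)$), so $g_*=0$ forces $g=0$. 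Pulling back along the homotopy equivalence $A_0\simeq M$ shows that any degree-$0$ chain map $A_0\to\ff[U]$ vanishing on homology is null-homotopic, and $\pi\iota_\otimes+\pi$ is such a map by Table~\ref{table:computation}.
\end{itemize}
So your plan is correct, but the ``generator-by-generator'' check you flag as the hard step is better replaced by the paper's minimal-model trick (or the abstract argument above), which makes the equivariance immediate.
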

\begin{proof}
By Table~\ref{table:computation}, the homology $H_*(A_0(4_1 \# 4_1^r))$ is isomorphic to $\ff[U] \oplus \ff^4$. The structure theorem for modules over a principal ideal domain implies that $A_0(4_1 \# 4_1^r)$ is homotopy equivalent to a complex generated over $\ff[U]$ by $\{x, a, b, c, d, e, f, g, h\}$, where $\partial a = Ub$, $\partial c = Ud$, $\partial e = Uf$, and $\partial g = Uh$, and we identify
\[
x = x|x, \quad b = x | d, \quad d = d|x, \quad f = a|d + d|a + b|b + c|c, \quad \text{and} \quad h = d|d.
\]
On this complex, the action of $\iota$ is given by
\begin{align*}
x\mapsto x+b+d+h, \quad & b\mapsto b+h, \quad d\mapsto d+h, \quad f\mapsto f+b+d, \quad h \mapsto h\\
&a\mapsto a+g, \quad c\mapsto c+g, \quad e\mapsto e+a+c, \quad g \mapsto g.
\end{align*}
The action of $\tau$ is given by
\begin{align*}
x\mapsto x, \quad & b\mapsto d, \quad d\mapsto b, \quad f\mapsto f+h, \quad h \mapsto h\\
&a\mapsto c, \quad c\mapsto a, \quad e\mapsto e+g, \quad g \mapsto g.
\end{align*}
The easiest way to see this is to note that in this case, $\iota$ and $\tau$ are determined by their actions on homology for grading reasons (see for example the proof of \cite[Lemma 4.2]{dm}) and then use Table~\ref{table:computation}. Consider the $U$-equivariant map $F:C\rightarrow \ff[U]$, defined by
\[
x\mapsto 1 \quad \text{and} \quad (\text{everything else})\mapsto 0.
\]
It is clear that $F\iota = \iota F$ and $F\tau = \tau F$. Hence $F$ gives a local map from $A_0(4_1 \# 4_1^r)$ to the trivial complex. Precomposing this with the map $f_{1,n}$ in \Cref{lem:cobordism} and using the $\tau$- and $\iota$-equivariant homotopy equivalence $\Gamma_{n, 0}$ gives the desired local map.
\end{proof}


We now turn to the proof of Theorem~\ref{thm:mainthm2}. For this, we again assume the reader has some familiarity with the machinery of local equivalence. Theorem~\ref{thm:mainthm2} will follow quickly from the fact if $\CFK(K_1)$ is $\iota_K$-locally equivalent to $\CFK(K_2)$ in the sense of Zemke \cite[Definition 2.4]{zemke2019connected}, then $\CFK(K_1 \# K_1^r)$ and $\CFK(K_2 \# K_2^r)$ are $(\tau_K, \iota_K)$-locally equivalent in the sense of \cite[Definition 2.13]{dms}. More precisely, recall from the work of Zemke \cite{zemke2019connected} that we have a homomorphism
\[
\CFK_{UV}: \mathcal{C} \rightarrow \mathfrak{K}_\iota, \quad [K] \mapsto [(\CFK(K), \iota_K)]
\]
from the smooth concordance group to the local equivalence group $\mathfrak{K}_\iota$ of $\iota_K$-complexes. (We follow the notation of \cite{dms}, rather than the usual notation $\mathfrak{I}_K$ of \cite{zemke2019connected}.) 
In \cite{dms}, it is shown that we similarly have a homomorphism from Sakuma's strongly invertible concordance group $\smash{\widetilde{\cC}}$ (see \cite{sakuma}) to the group of $(\tau_K, \iota_K)$-complexes:
\[
\CFK_{UV}: \smash{\widetilde{\cC}} \rightarrow \mathfrak{K}_{\tau,\iota}, \quad [K] \mapsto [(\CFK(K), \iota_K, \tau_K)]
\]
In \cite[Section 3]{bi}, it is noted that there is a doubling homomorphism from $\cC$ to $\smash{\widetilde{\cC}}$. 
Here, we produce the algebraic analogue of this doubling map:


\begin{lem}\label{lem:doublingmap}
There exists a map $\mathcal{D} \colon \mathfrak{K}_\iota \rightarrow \mathfrak{K}_{\tau, \iota}$ such that the following square commutes. 
\[
\xymatrix{
\mathcal{C} \ar[rr]^{K\mapsto K\# K^r} \ar[d]_{\CFK_{UV}} & & \widetilde{\cC} \ar[d]^{\CFK_{UV}} \\
\mathfrak{K}_\iota \ar[rr]^{\mathcal{D}} & & \mathfrak{K}_{\tau,\iota}
}
\]
\end{lem}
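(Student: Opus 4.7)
The plan is to construct $\mathcal{D}$ by directly transcribing, at the algebraic level, the tensor product formula of Theorem~\ref{thm:swapping}. Given an $\iota_K$-complex $(C, \iota_C)$, let $C^*$ denote the complex obtained from $C$ by interchanging the roles of $\cU$ and $\cV$, so that the identity on the underlying $\ff$-module becomes a skew-graded, skew-equivariant switch isomorphism $\sw \colon C \to C^*$. Equip $C^*$ with the involution $\iota_{C^*} := \sw \circ (\varsigma_C \circ \iota_C) \circ \sw^{-1}$, in analogy with the relation $\iota_{K^r} \simeq \sw \circ (\varsigma_K \circ \iota_K) \circ \sw$ from Section~\ref{sec:3}. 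Define
\[
\mathcal{D}(C, \iota_C) := (C \otimes C^*,\ \tau_\otimes,\ \iota_\otimes),
\]
where $\tau_\otimes = (\id \otimes \id + \Psi \otimes \Phi) \circ \tau_\exch$ and $\iota_\otimes = \varsigma_\otimes \circ (\id \otimes \id + \Psi \otimes \Phi) \circ (\iota_C \otimes \iota_{C^*})$, with $\tau_\exch$ the algebraic analogue of the exchange-and-reflect map, and $\Psi$, $\Phi$, $\varsigma_\otimes$ the usual formal derivative and Sarkar operators on the tensor product complex.

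The substantive step is to verify that $\mathcal{D}$ descends to local equivalence classes. Given a local map $f \colon (C_1, \iota_{C_1}) \to (C_2, \iota_{C_2})$ of $\iota_K$-complexes, conjugation by the switch map yields a map $f^* \colon C_1^* \to C_2^*$, and the candidate local map witnessing $\mathcal{D}(C_1) \simeq_{\tau, \iota} \mathcal{D}(C_2)$ is $f \otimes f^*$. This is grading-preserving and clearly induces an isomorphism on localized homology in the appropriate grading, so the content lies in checking that $f \otimes f^*$ homotopy commutes with both $\tau_\otimes$ and $\iota_\otimes$.

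For the commutation with $\tau_\otimes$, the map $\tau_\exch$ commutes strictly with $f \otimes f^*$ by construction, while compatibility with the correction term $\Psi \otimes \Phi$ follows from the naturality of the formal derivatives $\Psi$ and $\Phi$ under $\ff[\cU, \cV]$-equivariant chain maps established by Zemke \cite{zemke-stabilization}. For the commutation with $\iota_\otimes$, one combines the homotopy $f \circ \iota_{C_1} \simeq \iota_{C_2} \circ f$ with the induced homotopy $f^* \circ \iota_{C_1^*} \simeq \iota_{C_2^*} \circ f^*$ (which follows from the defining formula for $\iota_{C^*}$), together with the naturality of $\varsigma$ and of $\Psi \otimes \Phi$. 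This mirrors the argument used to show that a tensor product of $\iota_K$-local maps is $\iota_\otimes$-local in the connected sum formula of \cite[Theorem 1.1]{zemke2019connected}. The main expected obstacle is purely organizational: each ingredient contributes its own homotopy correction, and these must be assembled into a single coherent chain homotopy relating the two sides, but no new ideas are required beyond tracking the pieces.

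Once well-definedness is established, commutativity of the diagram is immediate: when $(C, \iota_C) = (\CFK(K), \iota_K)$, the definition of $\mathcal{D}(C, \iota_C)$ literally coincides with the right-hand side of Theorem~\ref{thm:swapping}, which in turn is identified up to homotopy equivalence with $(\CFK(K \# K^r), \tau_\sw, \iota_{K \# K^r}) = \CFK_{UV}([K \# K^r])$. Hence $\mathcal{D} \circ \CFK_{UV}([K]) = \CFK_{UV}([K \# K^r])$, as required.
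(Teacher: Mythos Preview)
Your proposal is correct and follows essentially the same approach as the paper: you define $\mathcal{D}$ by algebraically transcribing the formulas of Theorem~\ref{thm:swapping} (with $C^*$ playing the role of the paper's $C^r$ and $\iota_{C^*}=\sw\circ(\varsigma_C\circ\iota_C)\circ\sw^{-1}$ matching the paper's $(\varsigma\circ\iota)^r$), and you verify well-definedness via the tensor product map $f\otimes f^*$ using the strict commutation with $\tau_\exch$ together with the naturality of $\Phi$, $\Psi$, and $\varsigma$ under chain maps, exactly as the paper does. The only cosmetic difference is that the paper cites \cite[Lemma~2.8]{zemke2019connected} rather than \cite{zemke-stabilization} for the naturality of the formal derivatives.
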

\begin{proof}
Given an $\iota_K$-complex $C = (C, \iota)$, define $C^r$ to be the complex 
\[
C^r = C\otimes_{\ff[\cU,\cV]}\ff[\cV,\cU].
\]
This means that we interchange the roles of $\cU$ and $\cV$, together with the two components of the bigrading. Precomposing $\iota \colon C \rightarrow C$ with the Sarkar map gives a map $\varsigma \circ \iota \colon C \rightarrow C$. This induces a map 
\[
(\varsigma \circ \iota)^r \colon C^r \rightarrow C^r
\]
which makes $C^r$ into an $\iota_K$-complex. Let
\[
    \mathcal{D}(C) = (C\otimes C^r,\tau_{\otimes},\iota_{\otimes}), 
\]
where
\[
\begin{split}
    \tau_{\otimes} &= (\id \otimes \id +\Psi_C \otimes \Phi_{C^r}) \circ \tau_{\exch} \\
    \iota_{\otimes} &= \varsigma_{C \otimes C^r} \circ (\id \otimes \id +\Psi_C \otimes \Phi_{C^r}) \circ (\iota\otimes (\varsigma \circ \iota)^r)
\end{split}
\]
as in \Cref{thm:swapping}. Here, $\tau_{\exch}$ is defined by 
\[
\tau_{\exch}(x\otimes y)= y\otimes x.
\]
Note that since the identity map $C\rightarrow C^r$ is a skew-isomorphism, $\tau_{\exch}$ is a skew-isomorphism as well.

To show that $\mathcal{D}$ is well-defined, we must show that $\mathcal{D}$ maps $\iota_K$-locally equivalent complexes to $(\tau_K, \iota_K)$-locally equivalent complexes. To prove this, let $f:C_1\rightarrow C_2$ be an $\iota_K$-local map. Then $f$ induces an $\iota_K$-local map $f^r:C^r_1\rightarrow C^r_2$. Consider the map
\[
f\otimes f^r:C_1\otimes C^r_1\rightarrow C_2\otimes C^r_2.
\]
In \cite[Lemma 2.8]{zemke2019connected}, it is shown that $\Phi$ and $\Psi$ commute with all chain maps up to homotopy. It follows that $f\otimes f^r$ homotopy commutes with $\iota_\otimes$. Furthermore, by definition of $f^r$, we have
\[
\tau_{\exch} \circ (f\otimes f^r) = (f\otimes f^r)\circ \tau_{\exch}.
\]
Again using the fact that $\Phi$ and $\Psi$ commute with all chain maps, this shows $f\otimes f^r$ homotopy commutes with $\tau_\otimes$. Thus $f\otimes f^r$ is a $(\tau_K, \iota_K)$-local map, as desired.
\end{proof}
\noindent
Lemma~\ref{lem:doublingmap} should be compared with \cite[Theorem 1.8]{mallick-surgery}, in which the large-surgery actions of $\tau_K$ and $\iota_K$ on $\CFK(K \# K^r)$ are used to define concordance invariants for $K$.

\begin{proof}[Proof of \Cref{thm:mainthm2}]
If $\tau(K) \neq 0$, then it follows from the cabling formula of \cite[Theorem 1]{hom} that $\tau(K_{2,k} \# - T_{2,k}) \neq 0$ and thus that $K_{2,k} \# - T_{2,k}$ is not slice. (Here, $\tau$ is the concordance invariant of \cite{OS-fourball}.) Hence we may assume $\tau(K) = 0$. Under the assumptions of the theorem, it then follows from \cite[Section 8]{hm} that $\CFK(K)$ is $\iota_K$-locally equivalent to $\CFK(4_1)$. By Lemma~\ref{lem:doublingmap}, this implies that $\CFK(K \# K^r)$ is $(\tau_K, \iota_K)$-locally equivalent to $\CFK(4_1 \# 4_1^r)$. Note that if $\CFK(K_1)$ and $\CFK(K_2)$ are $(\tau_K, \iota_K)$-locally equivalent, then $A_0(K_1)$ and $A_0(K_2)$ are easily seen to be $(\tau, \iota)$-locally equivalent.

Taking the branched double cover over $K_{2, k} \# - T_{2, k}$ gives
\[
\Sigma_2(K_{2, k} \# - T_{2, k}) \cong \Sigma_2(K_{2, k}) \# - \Sigma_2(T_{2, k}) \cong S^3_k(K \# K^r) \# - L(k, 1).
\]
Since $L(k, 1)$ is an L-space, the Heegaard Floer homology of the right-hand side (in the self-conjugate $\spinc$-structure) is isomorphic to $\HFm(S^3_k(K \# K^r), \s_0)$ up to an overall grading shift. This isomorphism is $(\tau, \iota)$-equivariant; here we are using \cite[Proposition 6.8]{dhm}. If $k$ is large, then (again up to appropriate grading shift) this latter complex is already $(\tau, \iota)$-equivariantly isomorphic to $A_0(K \# K^r)$. Otherwise, we use a similar cobordism as in Section~\ref{sec:4} to construct a $(\tau, \iota)$-local map from $\CFm(S^3_k(K \# K^r), \s_0)$ to the Heegaard Floer complex of large surgery, and thus into $A_0(K \# K^r)$. Since $A_0(K \# K^r)$ is $(\tau, \iota)$-locally equivalent to $A_0(4_1 \# 4_1^r)$, the rest of the argument is exactly the same as in the proof of Theorem~\ref{thm:mainthm1}.
\end{proof}

\begin{rem}\label{rem:homologyconcordance}
Our proof of Theorem~\ref{thm:mainthm2} shows that $K_{2,k} \# -T_{2,k}$ is not slice in any $\ZZ/2\ZZ$-homology ball. Indeed, if a knot admits a slice disk $D$ in such a ball, then it follows from the proof of \cite[Lemma 4.2]{casson1978slice} that we can form the branched double cover over $D$ and that this double cover is again a $\ZZ/2\ZZ$-homology ball. Hence the same obstruction as discussed in Section~\ref{sec:2} applies. Likewise, the argument of Theorem~\ref{thm:mainthm2} shows that $K_{2,1}$ is not torsion in the $\ZZ/2\ZZ$-homology concordance group.
\end{rem}

\begin{rem}\label{rem:futurework}
For the reader familiar with \cite{dhm}, we emphasize that considering the $\tau$- and $\iota$-actions on $A_0(4_1 \# 4_1^r)$ simultaneously is crucial. Indeed, it is straightforward to check that the actions of $\tau$, $\iota$, and $\tau \circ \iota$ are all individually locally trivial on $A_0(4_1 \# 4_1^r)$. The formalism used in this article is thus a slight enhancement of the setup of \cite{dhm} and \cite[Theorem 1.8]{mallick-surgery}, in which the action of $\iota$ is incorporated only through the consideration of $\tau \circ \iota$. 
\end{rem}

\bibliographystyle{amsalpha}
\bibliography{ref}
\end{document}